\newtheorem{theorem}{Theorem}[section]
\newtheorem{proposition}[theorem]{Proposition}
\newtheorem{corollary}[theorem]{Corollary}
\newtheorem{lemma}[theorem]{Lemma}
\theoremstyle{remark}
\newtheorem{example}[theorem]{Example}
\newtheorem{remark}[theorem]{Remark}
\def\c{{\mathrm c}}
\def\e{{\mathrm e}}
\def\g{{\mathrm g}}
\def\m{{\mathrm m}}
\def\t{{\mathrm t}}
\def\B{{\mathrm B}}
\def\C{{\mathrm{C}}}
\def\F{{\mathrm F}}
\def\G{{\mathrm G}}
\def\PF{{\mathrm {PF}}}
\def\Ap{{\mathrm{ Ap}}}
\def\max{{\mathrm{ max}}}
\def\gcd{{\mathrm{gcd}}}
\def\max{{\mathrm{max}}}
\def\msg{{\mathrm{ msg }}}
\def\mod{{\mathrm{ mod }}}
\def\CF{\mathcal{F}}
\def\CC{\mathscr{C}}
\def\N{\mathbb{N}}
\def\Z{\mathbb{Z}}
\def\rank{\mathrm{rank}\, }
\def\Ap{\mathrm{Ap}}
\def\SG{\mathrm{SG}}
\def\int{\mathrm{int}}
\title{Semi-covariety of numerical semigroups}
\author{
M. A. Moreno-Fr\'{\i}as \footnote{
    Dpto. de Matem\'aticas, Facultad de Ciencias,
    Universidad de C\'adiz, E-11510, Puerto Real  (C\'{a}diz, Spain).
    Partially supported by  Junta de Andaluc\'{\i}a group FQM-298 and by ProyExcel-00868.
     E-mail: mariangeles.moreno@uca.es.}
\and
 J. C. Rosales \footnote{
    Dpto. de \'Algebra, Facultad de Ciencias, Universidad de Granada,
    E-18071, Granada. (Spain).
    Partially supported by  Junta de Andaluc\'{\i}a group FQM-343 and by ProyExcel-00868.
    E-mail: jrosales@ugr.es.
}
 }
\date{}
\begin{document}

\maketitle

\begin{abstract}

	The main aim of this work is to introduce and  justify the study of  semi-covarities. 
	 A {\it semi-covariety}  is a non-empty family $\CF$ of numerical semigroups such   that it is closed under finite intersections, has a minimum, $\min(\CF),$ and if $S\in  \CF$ being  $S\neq \min(\CF),$ then there is  $x\in S$ such that $S\backslash \{x\}\in  \CF.$ As examples, we will  study the semi-covariety formed by all the numerical semigroups containing a fixed numerical semigroup, as well as,  the semi-covariety composed by 
	  all the numerical semigroups of coated odd elements and fixed Frobenius number.

\smallskip
    {\small \emph{Keywords:} Numerical semigroup, coe-numerical semigroup,
     covariety, ratio-covariety, Frobenius number,  algorithm. }

    \smallskip
    {\small \emph{MSC-class:} 20M14, 11D07 }
\end{abstract}

\section{Introduction}

Let $\Z$ be the set of integers and $\N=\{z\in \Z \mid z\ge 0\}$. A {\it submonoid} of $(\N,+)$ is a subset  of $\N$ which is closed under addition and contains the element $0.$ A {\it numerical semigroup} is a
submonoid $S$ of $(\N,+)$ such that $\N\backslash
S=\{x\in \N \mid x \notin S\}$ has finitely many elements.

If $S$ is a numerical semigroup, then $\m(S)=\min(S\backslash \{0\})$, $\F(S)=\max\{z\in \Z \mid z \notin S\}$ and $\g(S)=\#(\N \backslash S)$ (where $\# X $ denotes the cardinality of a set $X$) are 
three important invariants of $S,$  called   {\it multiplicity},  {\it Frobenius number} and  {\it genus} of $S$, respectively.

If $A$ is a subset nonempty of $\N$, we denote by $\langle A
\rangle$ the submonoid of $(\N,+)$ generated by $A$, that is,
$\langle A \rangle=\{\lambda_1a_1+\dots+\lambda_na_n \mid n\in
\N\backslash \{0\}, \, \{a_1,\dots, a_n\}\subseteq A \mbox{ and }
\{\lambda_1,\dots,\lambda_n\}\subseteq \N\}.$  In \cite[Lema 2.1]{libro}) it is shown that $ \langle A \rangle$
is a numerical semigroup if and only if $\gcd(A)=1.$ 

If $M$ is a submonoid of $(\N,+)$ and $M=\langle A \rangle$, then we
say that $A$ is a {\it system of generators} of $M$. Moreover, if $M\neq
\langle B \rangle$ for all $B \varsubsetneq A$, then we will say
that $A$ is a {\it minimal system of generators} of $M$. In
\cite[Corollary 2.8]{libro} is shown that every submonoid of
$(\N,+)$ has a unique minimal system of generators, which in
addition is finite. We denote by $\msg(M)$ the minimal system of
generators of $M$. The cardinality of $\msg(M)$ is called the {\it
	embedding dimension }of $M$ and will be denoted by $\e(M).$

The Frobenius problem (see \cite{alfonsin})  focuses on finding formulas to calculate the Frobenius number and the genus of a numerical semigroup from its minimal  system of generators. The problem was solved in \cite{sylvester} for numerical semigroups with embedding dimension two.  Nowadays, the problem is still open in the case of numerical semigroups with embedding dimension  greater than or equal to three. Furthemore, in this case the problem of computing the Frobenius number of a general numerical semigroup becomes NP-hard.

 A {\it semi-covariety}  is a nonempty family $\CF$ of numerical semigroups that  fulfills  the following conditions:
\begin{enumerate}
	\item[1)]  $\CF$ has a minimum, with respect to  inclusion, denoted by $\min(\CF).$
	\item[2)] If $\{S, T\} \subseteq \CF$, then $S \cap  T \in \CF$.
	\item[3)]  If $S \in \CF$  and $S \neq  \min(\CF)$, then there is $x\in \msg(S)$ such that $S \backslash \{x\} \in \CF$.
\end{enumerate}

The concept  of semi-covariety generalizes the concepts of covariety and ratio-covariety introduced in \cite{covariedades} and \cite{ratio}, respectively.

In Section 2, we will see that every semi-covariety is finite and its elements can  be arranged in a tree.  Moreover, we present a characterization of the children of an arbitrary vertex in this tree. This fact will allow us to give an algorithmic procedure  to calculate all the elements of a semi-covariety.

Let $\CF$ be a semi-covariety. A set $X$ is an $\CF$-set if $X\cap \min(\CF)=\emptyset$ and $X \subseteq S$ for some $S\in \CF.$ In Section 2, we will see that if $X$ is an $\CF$-set, then there is the smallest element of $\CF$ that contains $X.$ This element will be denoted by $\CF[X].$ The $\CF$-{\it rank} of $S\in \CF$  is $\CF\rank(S)=\min\{\ \# X\mid X \mbox{ is a }\CF\mbox{-set and }\CF[X]=S \}.$ 

Let $\Delta$ be a numerical semigroup, in Section 3 we will show that $\theta(\Delta)=\{S\mid  S \mbox{ is a numerical semigroup and  }S\subseteq \Delta\}$ is a semi-covariety. The results of Section 2, allows us to present an algorithm which computes all the elements of $\theta(\Delta).$ We show that if $X$ is a $\theta(\Delta)$-set, then $\theta(\Delta)[X]=\langle X \rangle + \Delta.$ Besides, we study the elements of $\theta(\Delta)$ with $\theta(\Delta)$-rank 1.

Following the notation introduced in \cite{coe}, a {\it numerical semigroup of coated odd elements} (hereinafter, {\it Coe-semigroup}) is a numerical semigroup $S$ verifying that $\{x-1,x,x+1\}\subseteq S$ if $x\in S$ is odd. In \cite[Proposition 2]{coe} it is shown that if $S$ is a coe-semigroup, then $\F(S)$ is odd.

If $F$ is a positive odd integer, then we denote by $\CC(F)=\{S \mid S \mbox{ is a coe-semi-}\\ \mbox{group and }\F(S)=F\}.$ In section 4, we will see that $\CC(F)$ is a semi-covariety. The results of Section 2, will enable us to show an algorithm to compute all elements of $\CC(F).$\\

If $x$ is a positive odd integer, then we denotre $\c(x)=\{x-1,x+1\}.$  If $X \subseteq \N \backslash \{0\},$ then $\C(X)=X\cup \left ( \displaystyle \bigcup_{ \begin{array}{c}
		x\in X\\
		x \mbox{ impar}\\
		\end{array}}
		\c(x)
		\right).$ In Section 4, we will prove that if $X$ is a $\CC(F)$-set, then $\CC(F)[X]=\langle \C(X)\rangle \cup \{F+1,\rightarrow\}$ 
		 (the symbol $\rightarrow$ means that every integer greater than $F+1$
		belongs to the set). Finally, we will study the elements of $\CC(F)$ with $\CC(F)$-$\rank$1.
		
		\section{Semi-covarieties}
		
		Throughout this section, $\CF$ will denote a semi-covariety. 
		If $T$ is a numerical semigroup, then $\N\backslash T$ is a finite set. As a consequence, we deduce the following result.
		\begin{lemma}\label{lema1} If $T$ is a numerical semigroup, then the set $\{S\mid S \mbox{ is a numerical}\\ \mbox{semigroup and } T\subseteq S\}$ is finite. 
		\end{lemma}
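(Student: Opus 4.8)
The plan is to show that if $T$ is a numerical semigroup, then any numerical semigroup $S$ containing $T$ is squeezed between $T$ and $\N$, and there are only finitely many such sets because $\N \setminus T$ is finite.

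First I would observe that if $T \subseteq S$, then $\N \setminus S \subseteq \N \setminus T$. Since $T$ is a numerical semigroup, $\N \setminus T$ is a finite set by definition. Therefore $\N \setminus S$ is a subset of a fixed finite set, and hence $S$ is completely determined by the choice of the subset $\N \setminus S$ of $\N \setminus T$. The number of such subsets is at most $2^{\#(\N \setminus T)}$, which is finite.

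More precisely, I would define a map from the family $\{S \mid S \text{ is a numerical semigroup and } T \subseteq S\}$ to the power set of $\N \setminus T$ by sending $S$ to $\N \setminus S$. This map is injective: if $\N \setminus S_1 = \N \setminus S_2$, then $S_1 = S_2$. Since the power set of the finite set $\N \setminus T$ is finite, the family is finite. (One does not even need every subset of $\N \setminus T$ to arise from a numerical semigroup — the injection into a finite set already suffices.)

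There is no real obstacle here; the statement follows directly from the definition of a numerical semigroup as a submonoid with finite complement in $\N$, together with the elementary fact that supersets have smaller complements. The only thing to be careful about is to phrase the argument as an injection (or a bound on cardinality) rather than claiming equality with the power set, since not all subsets of $\N \setminus T$ correspond to supersemigroups of $T$.
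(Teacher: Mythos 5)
Your proof is correct and follows essentially the same route as the paper, which deduces the lemma directly from the finiteness of $\N\setminus T$: any numerical semigroup $S\supseteq T$ is determined by $\N\setminus S\subseteq \N\setminus T$, so the family injects into a finite power set. Your extra care in phrasing this as an injection rather than a bijection is a minor refinement of the same argument.
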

		
		As $\CF \subseteq \{S\mid S \mbox{ is a numerical semigroup and }\min(\CF)\subseteq S\},$ then by applying Lemma \ref{lema1}, we obtain the next result.
		\begin{proposition}\label{proposition2}
		$\CF$ is a finite set. 
		\end{proposition}
		Let $S\in \CF\backslash \{\min(\CF)\}$ and let $$ \mu(\CF, S)=\min\{x\in \msg(S)\mid S \backslash \{x\}\in \CF\}.$$ 
		
		 Define recursively the following sequence of elements from  $\CF$:
		\begin{itemize}
			\item $S_0=S$,
			\item $S_{n+1} =\left\{\begin{array}{ll}
				S_n\backslash \{\mu(\CF,S_n)\}  & \mbox{if }   S_n\neq \min(\CF),\\
				\min(\CF) & \mbox{otherwise.}
			\end{array}
			\right.$
		\end{itemize}
		
		The sequence $\{S_n\}_{n\in \N}$ is called the $\CF$-{\it sequence associated} to $S.$
		The following result has  an immediate proof. 
		
		\begin{lemma}\label{lemma3}
			If $S \in \CF$ and $\{S_n\}_{n\in \N}$ is the  $\CF$-sequence associated to $S,$ then  there is $k\in \N$ such that $\min(\CF)=S_k\subsetneq S_{k-1} \subsetneq \dots \subsetneq S_0=S.$ Moreover, the cardinality of $S_i\backslash S_{i+1}$ is equal to $1$ for every $i\in \{0,1,\dots,k-1\}.$ 	
		\end{lemma}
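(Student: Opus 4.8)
The plan is to analyze the $\CF$-sequence $\{S_n\}_{n\in\N}$ directly from its recursive definition. First I would observe that the sequence is eventually constant equal to $\min(\CF)$: by Proposition \ref{proposition2}, $\CF$ is finite, and as long as $S_n\neq\min(\CF)$ we have, by condition 3) in the definition of semi-covariety, that $\mu(\CF,S_n)$ is well defined and $S_{n+1}=S_n\backslash\{\mu(\CF,S_n)\}\in\CF$; since removing an element of a minimal generating set yields a proper subset, $S_{n+1}\subsetneq S_n$, so the $S_n$ are strictly decreasing until $\min(\CF)$ is reached. Because $\CF$ has finitely many elements, a strictly descending chain in $\CF$ cannot be infinite, so there is a least $k\in\N$ with $S_k=\min(\CF)$; for that index one has $\min(\CF)=S_k\subsetneq S_{k-1}\subsetneq\dots\subsetneq S_0=S$, and from then on the sequence stabilizes at $\min(\CF)$.

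Next I would verify the claim about the cardinalities of the successive differences. For $i\in\{0,1,\dots,k-1\}$ we have $S_i\neq\min(\CF)$ by minimality of $k$, hence $S_{i+1}=S_i\backslash\{\mu(\CF,S_i)\}$, so $S_i\backslash S_{i+1}=\{\mu(\CF,S_i)\}$, a singleton; thus $\#(S_i\backslash S_{i+1})=1$, which is exactly the "moreover" part of the statement.

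The only point requiring a small remark — and this is the mild obstacle, though the lemma is advertised as immediate — is to make sure $k$ is genuinely the point where the chain first reaches the minimum, i.e. that $S_i\neq S_{i+1}$ strictly for $i<k$. This is guaranteed because $\mu(\CF,S_i)\in\msg(S_i)$, and by \cite[Corollary 2.8]{libro} a numerical semigroup cannot be generated by a proper subset of its minimal system of generators, so $S_i\backslash\{\mu(\CF,S_i)\}$ is a proper (indeed no longer even a submonoid unless further elements are discarded, but in any case proper) subset of $S_i$; in particular the inclusions in the chain are strict. Combining the strict-descent observation with the finiteness of $\CF$ gives the existence of $k$, and the computation of $S_i\backslash S_{i+1}$ finishes the proof.
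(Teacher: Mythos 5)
Your argument is correct and is exactly the ``immediate'' proof the paper has in mind (the paper omits it): condition 3) of the definition guarantees $\mu(\CF,S_n)$ is well defined whenever $S_n\neq\min(\CF)$, strict descent together with the finiteness of $\CF$ (Proposition \ref{proposition2}) forces the chain to reach $\min(\CF)$ after finitely many steps, and for $i<k$ one has $S_i\backslash S_{i+1}=\{\mu(\CF,S_i)\}$, a singleton. One aside in your last paragraph is mistaken, though harmless: since $S_{i+1}=S_i\backslash\{\mu(\CF,S_i)\}$ belongs to $\CF$ it certainly is a numerical semigroup (removing a minimal generator always yields one, cf.\ Lemma \ref{lemma15}), and properness of the inclusion needs no appeal to \cite[Corollary 2.8]{libro} --- it follows simply from $\mu(\CF,S_i)\in\msg(S_i)\subseteq S_i$.
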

		
		A {\it graph} $G$ is a pair $(V,E)$ where $V$ is a non-empty set and
		$E$ is a subset of $\{(u,v)\in V\times V \mid u\neq v\}$. The
		elements of $V$ and $E$ are called {\it vertices} and {\it edges},
		respectively. A {\it path (of
			length $n$)} connecting the vertices $x$ and $y$ of $G$ is a
		sequence of different edges of the form $(v_0,v_1),
		(v_1,v_2),\ldots,(v_{n-1},v_n)$ such that $v_0=x$ and $v_n=y$.

		A graph $G$ is {\it a tree} if there exists a vertex $r$ (known as
		{\it the root} of $G$) such that for any other vertex $x$ of $G,$
		there exists a unique path connecting $x$ and $r$. If  $(u,v)$ is an
		edge of the tree $G$, we say that $u$ is a {\it child} of $v$.
		
		Define the graph $\G(\CF)$ in the following way:
		\begin{itemize}
			\item $\CF$  is the set of vertices. 
			\item $(S,T)\in \CF \times \CF$ is an edge  if and only if $T=S\backslash \{\mu(\CF,S)\}.$
		\end{itemize}
		
		As a consequence of Lemma \ref{lemma3}, we have the following result.
		\begin{proposition}\label{proposition4}
			 $\G(\CF)$ is a tree with root $\min(\CF).$	
		\end{proposition}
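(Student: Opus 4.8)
The plan is to verify directly that $\G(\CF)$ satisfies the definition of a tree with the claimed root, using Lemma \ref{lemma3} as the essential tool. First I would observe that $\G(\CF)$ is indeed a graph: by Proposition \ref{proposition2} the vertex set $\CF$ is finite and non-empty, and each edge $(S,T)$ has $S\neq T$ since $T=S\backslash\{\mu(\CF,S)\}$ removes a genuine generator, so $T\subsetneq S$. Next, fix an arbitrary vertex $S\in\CF$ and let $\{S_n\}_{n\in\N}$ be the $\CF$-sequence associated to $S$. By Lemma \ref{lemma3} there is $k\in\N$ with $\min(\CF)=S_k\subsetneq S_{k-1}\subsetneq\cdots\subsetneq S_0=S$; and by construction of the $\CF$-sequence, for each $i\in\{0,\dots,k-1\}$ we have $S_{i+1}=S_i\backslash\{\mu(\CF,S_i)\}$, so $(S_i,S_{i+1})$ is an edge of $\G(\CF)$. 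Hence $(S_0,S_1),(S_1,S_2),\dots,(S_{k-1},S_k)$ is a path connecting $S$ and $\min(\CF)$; the edges are pairwise different because the chain $S_0\supsetneq S_1\supsetneq\cdots\supsetneq S_k$ is strictly decreasing, so no vertex — and a fortiori no edge — repeats. This establishes existence of a path from every vertex to $r:=\min(\CF)$.

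It remains to prove uniqueness of such a path. The key point is that every vertex $S\neq\min(\CF)$ has a \emph{unique outgoing edge} in $\G(\CF)$: indeed, the definition of the edge relation forces $T=S\backslash\{\mu(\CF,S)\}$, and $\mu(\CF,S)$ is a single well-defined element of $\msg(S)$ (the minimum of a non-empty set of integers, non-empty precisely because $S\neq\min(\CF)$ and $\CF$ is a semi-covariety). Now suppose $(v_0,v_1),(v_1,v_2),\dots,(v_{n-1},v_n)$ is any path with $v_0=S$ and $v_n=\min(\CF)$. Reading the edges in order, $v_1$ is forced to be $S\backslash\{\mu(\CF,S)\}=S_1$; then $v_1=S_1\neq\min(\CF)$ if $n>1$, so $v_2$ is forced to be $S_1\backslash\{\mu(\CF,S_1)\}=S_2$; inductively $v_i=S_i$ for all $i$, and the path terminates exactly when $S_i=\min(\CF)$, i.e. at $i=k$. (One should note that $\mu(\CF,\min(\CF))$ is undefined, so $\min(\CF)$ has no outgoing edge, which is why the path cannot continue past $v_k$ and cannot have started there unless $S=\min(\CF)$; also no vertex $v_i$ with $i<n$ can equal $\min(\CF)$, since it has an outgoing edge forcing a strictly smaller successor, contradicting $v_n=\min(\CF)$ at an earlier index.) Thus the path is unique, being componentwise equal to the $\CF$-sequence of $S$ truncated at $\min(\CF)$.

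The argument is essentially bookkeeping; the only mild subtlety — and the step I would be most careful about — is making precise that $\min(\CF)$ is the right choice of root rather than some other vertex: this is exactly the asymmetry that $\min(\CF)$ is the unique vertex with no outgoing edge (every $S\neq\min(\CF)$ has one by condition 3) of the semi-covariety definition together with the definition of $\mu(\CF,S)$), so paths can only terminate there, and Lemma \ref{lemma3} guarantees every $\CF$-sequence does reach it in finitely many steps. Combining existence and uniqueness, $\G(\CF)$ is a tree rooted at $\min(\CF)$. $\qed$
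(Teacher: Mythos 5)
Your proof is correct and follows exactly the route the paper intends: the paper simply states the proposition as an immediate consequence of Lemma \ref{lemma3}, and your argument (existence of a path via the $\CF$-sequence, uniqueness because every vertex other than $\min(\CF)$ has a unique outgoing edge determined by $\mu(\CF,\cdot)$) is the standard way to fill in that deduction.
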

		
		A tree can be  built recurrently starting from the root  and connecting, 
		through an edge, the vertices already built with  their  children. Hence, it is very interesting to characterize the children of an arbitrary vertex in $\G(\CF).$
		
		Following the terminology introduced in \cite{JPAA}, an integer $z$ is a {\it pseudo-Frobenius number} of $S$ if $z\notin S$ and $z+s\in S$ for all $s\in S\backslash \{0\}.$  We denote by $\PF(S)$
		the set of pseudo-Frobenius numbers of $S.$ The cardinality of $\PF(S)$ is an important invariant of $S$ (see \cite{froberg} and \cite{barucci}) called the {\it type} of $S,$  denoted by $\t(S).$
		
		If $S$ is a numerical semigroup,  denote by $\SG(S)=\{x\in \PF(S)\mid 2x\in S\}.$ Its elements will be called the {\it special gaps }of $S.$ The following result is Proposition 4.33 from \cite{libro}.
		
		\begin{proposition}\label{proposition5} Let $S$ be a numerical semigroup and $x\in \N\backslash S.$ Then $x\in \SG(S)$ if and only if $S\cup \{x\}$ is a numerical semigroup.
		\end{proposition}
		\begin{proposition}\label{proposition6}
		If $S\in \CF,$ then the set formed by the children of $S$ in the tree $\G(\CF)$ is  
		$$
		\{S\cup \{x\}\mid x\in \SG(S), S\cup \{x\}\in \CF \mbox{ and }\mu(\CF,S\cup \{x\})=x\}.
		$$
		\end{proposition}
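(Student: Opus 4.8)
The plan is to show both inclusions between the set of children of $S$ in $\G(\CF)$ and the set
$$
\mathcal{X}=\{S\cup \{x\}\mid x\in \SG(S),\ S\cup \{x\}\in \CF \mbox{ and }\mu(\CF,S\cup \{x\})=x\}.
$$
First I would recall what ``$T$ is a child of $S$ in $\G(\CF)$'' unravels to: by definition of the graph $\G(\CF)$, the pair $(T,S)$ is an edge, which means $S=T\backslash\{\mu(\CF,T)\}$; in particular $T\in\CF$, $T\neq\min(\CF)$, and $T=S\cup\{x\}$ where $x=\mu(\CF,T)$. So a child of $S$ always has the shape $S\cup\{x\}$ for a single element $x\notin S$.

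For the inclusion ``children $\subseteq \mathcal{X}$'': suppose $T=S\cup\{x\}$ is a child of $S$, with $x=\mu(\CF,T)$. Since $T\in\CF$ is a numerical semigroup and $S=T\backslash\{x\}$ is also a numerical semigroup (it lies in $\CF$), Proposition~\ref{proposition5} applied to $S$ and the gap $x$ gives $x\in\SG(S)$, because $S\cup\{x\}=T$ is a numerical semigroup. Also $S\cup\{x\}=T\in\CF$ by hypothesis, and $\mu(\CF,S\cup\{x\})=\mu(\CF,T)=x$. Hence $T\in\mathcal{X}$.

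For the reverse inclusion ``$\mathcal{X}\subseteq$ children'': take $x\in\SG(S)$ with $T:=S\cup\{x\}\in\CF$ and $\mu(\CF,T)=x$. By Proposition~\ref{proposition5}, $T=S\cup\{x\}$ is a numerical semigroup, so it makes sense to speak of $\mu(\CF,T)$; the hypothesis $\mu(\CF,T)=x$ in particular forces $x\in\msg(T)$ and $T\neq\min(\CF)$ (otherwise $\mu(\CF,T)$ would be undefined), and $T\backslash\{\mu(\CF,T)\}=T\backslash\{x\}=S\in\CF$. Thus $(T,S)$ satisfies the edge condition of $\G(\CF)$, i.e.\ $S=T\backslash\{\mu(\CF,T)\}$, so $T$ is a child of $S$. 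Combining the two inclusions yields the claimed description of the children set.

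The only genuinely delicate point is bookkeeping about when $\mu(\CF,T)$ is defined and the fact that removing the single special gap $x$ from $T$ returns exactly $S$ (rather than some other numerical semigroup): this is immediate since $T\backslash\{x\}=(S\cup\{x\})\backslash\{x\}=S$ as $x\notin S$. Everything else is a direct translation through the definition of $\G(\CF)$ and one application of Proposition~\ref{proposition5}; no numerical-semigroup computation beyond that is needed, and the finiteness and tree structure from Proposition~\ref{proposition2} and Proposition~\ref{proposition4} guarantee the statement is not vacuous.
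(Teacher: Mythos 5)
Your proposal is correct and follows essentially the same route as the paper: both directions are handled by unravelling the edge condition $S=T\backslash\{\mu(\CF,T)\}$, with Proposition~\ref{proposition5} supplying $x\in\SG(S)$ in the forward direction. You simply make explicit some bookkeeping (that $\mu(\CF,T)=x$ forces $x\in\msg(T)$ and $T\neq\min(\CF)$) that the paper leaves implicit.
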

		\begin{proof}
\begin{itemize}
	\item If $T$ is a child of $S,$ then $T\in \CF$ and $T\backslash \{\mu(\CF,T)\}=S.$ Therefore, $S\cup \{\mu(\CF,T)\}=T,$ $\mu(\CF,T)\in \SG(S),$ $S\cup \{\mu(\CF,T)\}\in \CF$ and $\mu(\CF,S\cup\{\mu(\CF,T)\})=\mu(\CF,T).$
	\item As $\mu(\CF,S\cup \{x\})=x,$ then $(S\cup \{x\})\backslash \{\mu(\CF,S\cup \{x\})\}=S$ and so $S\cup \{x\}$ is a child of $S.$
\end{itemize}			
		\end{proof}
	\begin{algorithm}\label{algorithm7}
		\caption{Computation of $\CF$} \mbox{}\par
%
		\begin{algorithmic}[1]
			\item[(1)] $\CF=\{\min(\CF)\},$ $\B=\{\min(\CF)\}.$
			\item[(2)] For all $S\in B$ compute $H(S)=\{T\in \CF \mid T \mbox{ is a child of }S\}.$
			\item[(3)] $C=\displaystyle \bigcup_{S\in B}H(S).$
			\item[(4)] If $C=\emptyset,$ then return $\CF.$
			\item[(5)] $\CF:=\CF \cup C,$ $B:=C$ and go to Step (2).
		\end{algorithmic} 
	\end{algorithm}

Recall that an $\CF$-set is a set $X$ verifying $X \cap \min(\CF)=\emptyset$ and $X\subseteq S$ for some $S\in \CF.$		

 If $X$ is an $\CF$-set, then we denote by $\CF[X]$ the intersection of all 
the elements of  $\CF$ containing $X.$ As $\CF$ is finite, then the intersection of elements from $\CF$ is also an element of  $\CF$. Hence, we have  the following result. 

\begin{proposition}\label{proposition8}
	If $X$ is an $\CF$-set, then $\CF[X]$ is the least element (with respect to  inclusion)  of  $\CF$ containing $X.$
\end{proposition}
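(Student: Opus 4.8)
The plan is to show that $\CF[X]$, defined as the intersection of all elements of $\CF$ that contain $X$, is itself an element of $\CF$ and is therefore the least such element with respect to inclusion. First I would observe that since $X$ is an $\CF$-set, the family $\CF_X=\{S\in\CF\mid X\subseteq S\}$ is non-empty: by definition of an $\CF$-set there exists at least one $S\in\CF$ with $X\subseteq S$. By Proposition~\ref{proposition2}, $\CF$ is finite, so $\CF_X$ is a finite non-empty family, say $\CF_X=\{S_1,\dots,S_n\}$.

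Next I would use the closure under finite intersection, condition 2) in the definition of a semi-covariety: since each $S_i\in\CF$, an easy induction on $n$ gives $S_1\cap S_2\cap\cdots\cap S_n\in\CF$. This intersection is exactly $\CF[X]$, so $\CF[X]\in\CF$. Moreover each $S_i$ contains $X$, hence so does their intersection, i.e.\ $X\subseteq\CF[X]$; this shows $\CF[X]\in\CF_X$. Finally, for any $T\in\CF$ with $X\subseteq T$ we have $T\in\CF_X$, so $T$ is one of the $S_i$ and therefore $\CF[X]=\bigcap_{j=1}^n S_j\subseteq T$. Thus $\CF[X]$ is contained in every element of $\CF$ containing $X$, which is precisely the assertion that it is the least such element with respect to inclusion.

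There is essentially no hard step here; the only point requiring a moment's care is that condition 2) of the definition only guarantees closure under the intersection of \emph{two} members of $\CF$, so one must explicitly invoke the finiteness of $\CF$ (Proposition~\ref{proposition2}) to reduce the possibly-large intersection $\bigcap\CF_X$ to a finite iterated intersection before applying condition 2) inductively. The argument as a whole is a direct unwinding of the definitions, and the main interest of the statement is that it makes the notation $\CF[X]$ and the subsequent notion of $\CF$-rank well defined.
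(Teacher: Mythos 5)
Your argument is correct and is essentially the paper's own: the authors likewise observe that $\CF$ is finite (Proposition~\ref{proposition2}) and closed under pairwise intersection, so the intersection of the non-empty finite family of members of $\CF$ containing $X$ lies in $\CF$, contains $X$, and is contained in every such member. Your only addition is to spell out the induction reducing a finite intersection to repeated binary intersections, which the paper leaves implicit.
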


If $X$ is an $\CF$-set and $S=\CF[X]$, then we say that $X$ is an $\CF$-{\it system of generators} of $S.$ Furthemore, if $S\neq
\CF[Y]$ for all $Y \varsubsetneq X$, then we will say
that $X$ is an $\CF$-{\it minimal system of generators} of $S.$

\begin{remark}\label{renark9}
	In \cite[Example 4]{covariedades} it is shown a covariety  $\CC$ and an element $S\in \CC$ such that $S$ admits two $\CC$-minimal system of generators. As every covariety is a semi-covariey, then we can assert that, in general, the $\CF$-minimal system of generators of an element $S\in \CF$ is not unique.
\end{remark}

\begin{proposition}\label{proposition10} If $S\in \CF,$ then $A=\{x\in \msg(S)\mid x\notin \min(\CF)\}$ is an $\CF$-set and $\CF[A]=S.$
	\end{proposition}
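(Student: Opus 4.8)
The plan is to verify the two assertions in turn: first that $A$ is an $\CF$-set, and then that $\CF[A]=S$. For the first part, I need to check the two defining conditions of an $\CF$-set. That $A\subseteq S$ for the given $S\in\CF$ is immediate, since $A\subseteq\msg(S)\subseteq S$. For the disjointness condition $A\cap\min(\CF)=\emptyset$, this is built into the definition of $A$: every element of $A$ is explicitly required not to lie in $\min(\CF)$, so the intersection is empty by construction. Hence $A$ is an $\CF$-set and $\CF[A]$ is well-defined by Proposition~\ref{proposition8}.

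For the second part, I want to show $\CF[A]=S$. Since $S\in\CF$ and $A\subseteq S$, the semigroup $S$ is one of the elements of $\CF$ containing $A$, so by Proposition~\ref{proposition8} (the least-element characterization) we get $\CF[A]\subseteq S$. For the reverse inclusion, the key observation is that $S$ is generated, as a numerical semigroup, by $\msg(S)=A\cup(\msg(S)\cap\min(\CF))$, and every element of $\msg(S)\cap\min(\CF)$ already belongs to $\min(\CF)$, hence to $\CF[A]$ (because $\min(\CF)\subseteq\CF[A]$, as $\min(\CF)$ is the minimum of $\CF$). Meanwhile $A\subseteq\CF[A]$ by definition of $\CF[A]$. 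Since $\CF[A]$ is a numerical semigroup, in particular a submonoid of $(\N,+)$, it is closed under addition, so it contains the submonoid generated by $A\cup(\msg(S)\cap\min(\CF))=\msg(S)$, which is exactly $S$. Therefore $S\subseteq\CF[A]$, and combining the two inclusions gives $\CF[A]=S$.

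The only mild subtlety — and the step I would be most careful about — is the claim $\min(\CF)\subseteq\CF[A]$. This follows because $\CF[A]\in\CF$ (the intersection of elements of $\CF$ stays in $\CF$, as noted before Proposition~\ref{proposition8}) and $\min(\CF)$ is by definition contained in every element of $\CF$. Once that is in hand, writing $\msg(S)=A\sqcup(\msg(S)\setminus A)$ with $\msg(S)\setminus A\subseteq\min(\CF)$ makes the generation argument routine. No deeper machinery is needed: the whole proof rests on the least-element property of $\CF[A]$, the minimality of $\min(\CF)$, and closure of numerical semigroups under addition.
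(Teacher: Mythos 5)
Your proof is correct and follows essentially the same route as the paper: both reduce the reverse inclusion to the observations that $\msg(S)\setminus A\subseteq\min(\CF)$, that $\min(\CF)$ is contained in every element of $\CF$, and that a numerical semigroup containing $\msg(S)$ contains $S$. The only cosmetic difference is that you apply this directly to $\CF[A]$, whereas the paper applies it to an arbitrary $T\in\CF$ containing $A$ and then invokes Proposition~\ref{proposition8}.
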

	\begin{proof}
		It is clear that $A$ is an $\CF$-set. As $S\in \CF$ and $A\subseteq S,$ then by Proposition \ref{proposition8}, $\CF[A]\subseteq S.$ Let us now look at the reverse inclusion. If $T\in \CF$ and $A\subseteq T,$ then $A\cup \min(\CF)\subseteq T.$ Thus, $\msg(S)\subseteq T$ and so $S\subseteq T.$  By applying  again  Proposition \ref{proposition8},  we have that $S\subseteq \CF[A].$ 
	\end{proof}
	
If $S\in \CF,$ then   the $\CF$-{\it rank} of $S$  is $\CF\rank(S)=\min\{\ \# X\mid X \mbox{ is an }\CF\mbox{-set and }S=\CF[X] \}.$

As an immediate consequence of Proposition \ref{proposition10}, we obtain the next result.
\begin{corollary}\label{corollary11}
	If $S\in \CF,$ then  $\CF\rank(S)\leq \e(S).$
\end{corollary}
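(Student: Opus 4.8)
The statement to prove is Corollary~\ref{corollary11}: if $S\in\CF$, then $\CF\rank(S)\leq\e(S)$.

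The plan is to read off this bound directly from Proposition~\ref{proposition10}. That proposition exhibits a specific $\CF$-set, namely $A=\{x\in\msg(S)\mid x\notin\min(\CF)\}$, and asserts $\CF[A]=S$. By the very definition of $\CF\rank(S)$ as the minimum of $\#X$ over all $\CF$-sets $X$ with $\CF[X]=S$, the set $A$ is one of the competitors in that minimum, so $\CF\rank(S)\leq \#A$. Thus it only remains to observe that $\#A\leq\#\msg(S)=\e(S)$, which is immediate since $A\subseteq\msg(S)$ by construction. Chaining the two inequalities gives $\CF\rank(S)\leq\e(S)$.

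I do not anticipate any genuine obstacle here: the corollary is a one-line consequence of Proposition~\ref{proposition10} together with the definitions of $\CF\rank$ and $\e$. The only point that deserves a word is that $A$ really is an $\CF$-set (so that it is admissible in the minimum defining $\CF\rank(S)$), but this is already part of the statement of Proposition~\ref{proposition10} and was verified in its proof: $A\cap\min(\CF)=\emptyset$ by the defining condition on $A$, and $A\subseteq S\in\CF$. Hence the proof is simply: by Proposition~\ref{proposition10}, $A$ is an $\CF$-set with $\CF[A]=S$, so $\CF\rank(S)\leq\#A\leq\e(S)$.
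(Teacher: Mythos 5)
Your proof is correct and is exactly the argument the paper intends: the corollary is stated as an immediate consequence of Proposition~\ref{proposition10}, and your chain $\CF\rank(S)\leq\#A\leq\#\msg(S)=\e(S)$ is precisely that deduction.
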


The following result is easy to prove.
\begin{lemma}\label{lemma12}
	Let $S\in \CF.$ Then $\CF\rank(S)=0$ if and only if $S=\min(\CF).$
\end{lemma}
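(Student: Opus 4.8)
The plan is to work directly with the empty set. First I would observe that $\emptyset$ is an $\CF$-set: indeed $\emptyset \cap \min(\CF) = \emptyset$, and $\emptyset \subseteq S$ for every $S \in \CF$ (and $\CF \neq \emptyset$ by definition of a semi-covariety), so both defining conditions of an $\CF$-set hold trivially.

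Next I would identify $\CF[\emptyset]$. By Proposition~\ref{proposition8}, $\CF[\emptyset]$ is the least element of $\CF$, with respect to inclusion, that contains $\emptyset$; since every element of $\CF$ contains $\emptyset$, this is simply the least element of $\CF$, namely $\min(\CF)$. Hence $\CF[\emptyset] = \min(\CF)$.

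From here both implications are immediate. If $S = \min(\CF)$, then $\emptyset$ is an $\CF$-set with $\CF[\emptyset] = S$, and therefore $\CF\rank(S) \leq \#\emptyset = 0$; since $\CF\rank(S)$ is a nonnegative integer, this forces $\CF\rank(S) = 0$. Conversely, if $\CF\rank(S) = 0$, then there is an $\CF$-set $X$ with $\#X = 0$ and $\CF[X] = S$; as the only set of cardinality zero is $\emptyset$, we get $S = \CF[\emptyset] = \min(\CF)$.

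I do not expect any genuine obstacle here; the only point requiring a moment's care is the identification $\CF[\emptyset] = \min(\CF)$, which rests on Proposition~\ref{proposition8} together with the fact that $\min(\CF)$ itself belongs to $\CF$ and is contained in every member of $\CF$.
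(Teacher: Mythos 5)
Your proof is correct, and since the paper only remarks that this lemma ``is easy to prove'' without supplying an argument, your write-up via the identification $\CF[\emptyset]=\min(\CF)$ is exactly the intended one-liner (it is also the $S=\min(\CF)$ case of Proposition~\ref{proposition10}). The one point worth checking --- that the empty set qualifies as an $\CF$-set under the paper's definition --- you handle explicitly, and it must indeed be allowed for the rank of $\min(\CF)$ to be $0$ at all.
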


\begin{lemma}\label{lemma13}
	If $A$ is an $\CF$-set, $S=\CF[A],$ $x\in \msg(S)$ and $S\backslash \{x\}\in \CF,$ then $x\in A.$
\end{lemma}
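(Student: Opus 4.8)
The plan is to argue by contradiction: I would assume $x\notin A$ and derive that $S$ is properly contained in itself.

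First, since $S=\CF[A]$, Proposition \ref{proposition8} gives $A\subseteq S$. Combining this with the assumption $x\notin A$ yields $A\subseteq S\backslash\{x\}$. Moreover, $A$ is still an $\CF$-set, witnessed now by the member $S\backslash\{x\}$ of $\CF$: indeed $A\cap\min(\CF)=\emptyset$ because $A$ was an $\CF$-set to begin with, and $A\subseteq S\backslash\{x\}\in\CF$ by hypothesis.

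Now I would invoke the minimality of $\CF[A]$ from Proposition \ref{proposition8}: since $S\backslash\{x\}$ is an element of $\CF$ containing $A$, we must have $\CF[A]\subseteq S\backslash\{x\}$, that is, $S\subseteq S\backslash\{x\}$. This is impossible, because $x\in\msg(S)\subseteq S$, so $x$ belongs to $S$ but not to $S\backslash\{x\}$. Hence the assumption $x\notin A$ is false, and $x\in A$.

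I expect no genuine obstacle here; the proof reduces to a one-line contradiction once the right competing element of $\CF$, namely $S\backslash\{x\}$, is identified. The only subtlety worth flagging is the role of the two hypotheses: ``$S\backslash\{x\}\in\CF$'' is what supplies the competing element of $\CF$, while ``$x\in\msg(S)$'' is needed (beyond its implicit role in ensuring that $S\backslash\{x\}$ is a numerical semigroup, so that ``$S\backslash\{x\}\in\CF$'' even makes sense) precisely to guarantee $x\in S$, which is what turns $S\subseteq S\backslash\{x\}$ into an absurdity.
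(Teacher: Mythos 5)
Your proof is correct and is essentially identical to the paper's: both assume $x\notin A$, observe $A\subseteq S\backslash\{x\}\in\CF$, and invoke the minimality of $\CF[A]$ from Proposition \ref{proposition8} to get the absurd inclusion $S\subseteq S\backslash\{x\}$. Your extra remarks on the roles of the two hypotheses are accurate but not needed.
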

\begin{proof}
	If $x\notin A,$ then $A\subseteq S\backslash \{x\}$ and $S\backslash \{x\}\in \CF.$ Then by applying Proposition \ref{proposition8}, we have that $S=\CF[A]\subseteq S\backslash \{x\},$ which is absurd.
\end{proof}
By applying Lemmas \ref{lemma12} and \ref{lemma13}, it is straightforward to check the next result.

\begin{proposition}\label{proposition14}
	Let $S\in \CF.$ Then  $\CF\rank(S)=1$ if and only if $S\neq \min(\CF)$ and $S=\CF[\{\mu(\CF,S)\}].$
\end{proposition}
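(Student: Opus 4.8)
The plan is to prove both implications directly using the lemmas just established, treating $\CF\rank(S)=1$ as equivalent to the existence of a singleton $\CF$-set generating $S$.

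First I would prove the forward implication. Suppose $\CF\rank(S)=1$. By Lemma~\ref{lemma12}, $\CF\rank(S)=0$ precisely when $S=\min(\CF)$; since $1\neq 0$, we get $S\neq\min(\CF)$, which in particular guarantees that $\mu(\CF,S)$ is defined (it requires $S\in\CF\setminus\{\min(\CF)\}$). By definition of the rank, there is an $\CF$-set $A$ with $\#A=1$ and $\CF[A]=S$; write $A=\{a\}$. I claim $a=\mu(\CF,S)$. Indeed, $S=\CF[\{a\}]\neq\min(\CF)$, so by condition 3) in the definition of semi-covariety there is $y\in\msg(S)$ with $S\setminus\{y\}\in\CF$; applying Lemma~\ref{lemma13} to the $\CF$-set $\{a\}$ with this $y$ forces $y\in\{a\}$, i.e. $y=a$. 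Hence $a\in\msg(S)$ and $S\setminus\{a\}\in\CF$. Now $\mu(\CF,S)=\min\{x\in\msg(S)\mid S\setminus\{x\}\in\CF\}$; I would argue $a$ is in fact the minimum of this set. If some $x<a$ with $x\in\msg(S)$ and $S\setminus\{x\}\in\CF$ existed, then since $\{a\}$ is an $\CF$-set with $\CF[\{a\}]=S$, Lemma~\ref{lemma13} (with this $x$ in the role of $x$ and $A=\{a\}$) would give $x\in\{a\}$, a contradiction with $x<a$. So $\mu(\CF,S)=a$, and therefore $S=\CF[\{a\}]=\CF[\{\mu(\CF,S)\}]$.

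Conversely, suppose $S\neq\min(\CF)$ and $S=\CF[\{\mu(\CF,S)\}]$. Since $S\neq\min(\CF)$, $\mu(\CF,S)$ is defined, and by its definition $\mu(\CF,S)\in\msg(S)$, so in particular $\mu(\CF,S)\notin\min(\CF)$ — indeed if $\mu(\CF,S)$ lay in $\min(\CF)\subseteq S$, then... actually the cleaner route: the set $\{\mu(\CF,S)\}$ is an $\CF$-set because $\mu(\CF,S)\in S$ for some $S\in\CF$ and $\mu(\CF,S)\notin\min(\CF)$ (an element of $\msg(S)$ cannot lie in a proper subsemigroup, and $\min(\CF)\subseteq S\setminus\{\mu(\CF,S)\}$ since $S\setminus\{\mu(\CF,S)\}\in\CF$). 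Thus $\{\mu(\CF,S)\}$ is an $\CF$-set with $\CF[\{\mu(\CF,S)\}]=S$, so $\CF\rank(S)\leq 1$. Since $S\neq\min(\CF)$, Lemma~\ref{lemma12} rules out $\CF\rank(S)=0$, whence $\CF\rank(S)=1$.

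The main obstacle, as hinted in the text's "straightforward to check," is just assembling the pieces cleanly: the only subtle point is verifying that a singleton $\CF$-set generating $S$ must be exactly $\{\mu(\CF,S)\}$ rather than some other single element, and this is precisely what Lemma~\ref{lemma13} delivers once one observes that any $x\in\msg(S)$ with $S\setminus\{x\}\in\CF$ must belong to every singleton $\CF$-generating set. A secondary bookkeeping point is checking that $\{\mu(\CF,S)\}\cap\min(\CF)=\emptyset$, which follows because $\mu(\CF,S)\in\msg(S)\setminus\min(\CF)$ — membership of $\mu(\CF,S)$ in $\min(\CF)$ would contradict $S\setminus\{\mu(\CF,S)\}\in\CF$ having $\min(\CF)$ as a subset.
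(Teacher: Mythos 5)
Your proof is correct and follows exactly the route the paper intends (the paper gives no details, saying only that the result follows from Lemmas~\ref{lemma12} and~\ref{lemma13}): Lemma~\ref{lemma12} handles the rank-$0$ exclusion, condition~3) of the definition supplies some $y\in\msg(S)$ with $S\setminus\{y\}\in\CF$, and Lemma~\ref{lemma13} forces any singleton generating set to be $\{\mu(\CF,S)\}$, while $\min(\CF)\subseteq S\setminus\{\mu(\CF,S)\}$ gives that $\{\mu(\CF,S)\}$ is an $\CF$-set. The only blemish is the discarded aside that ``an element of $\msg(S)$ cannot lie in a proper subsemigroup'' (false in general, e.g.\ $2\in\langle 2,5\rangle\subsetneq\langle 2,3\rangle$), but you correctly replace it with the containment argument, so nothing is affected.
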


\section{The semi-covariety $\theta(\Delta)$}
Along this section $\Delta$ denotes a numerical semigroup and $\theta(\Delta)=\{S\mid S \mbox{ is a nu-}\\ \mbox{merical semigroup and } \Delta \subseteq S\}.$

The next result is well known and  easy to prove. 

\begin{lemma}\label{lemma15}
	Let $S$ and $T$ be numerical semigroups and $x\in S.$ Then the following conditions are verified:
	\begin{enumerate}
		\item[1)] $S\cap T$ is a numerical semigroup and $\F(S\cap T)=\max \{\F(S),\F(T)\}.$
		\item[2)] $S\backslash \{x\}$ is a numerical semigroup if and only if $x\in \msg(S).$
		\item[3)] If $S\subsetneq T,$ then $\min(T\backslash S)\in \msg(T).$
	\end{enumerate}
	
\end{lemma}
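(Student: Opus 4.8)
The three statements are all standard facts about numerical semigroups, so the plan is to treat each in turn with elementary arguments, keeping in mind that each part is "easy" and the main work is simply organizing the case analysis cleanly. For part~1), I would first verify that $S\cap T$ is a submonoid of $(\N,+)$: it contains $0$ since both $S$ and $T$ do, and it is closed under addition because $S$ and $T$ are. To see it is a numerical semigroup, note $\N\setminus(S\cap T)=(\N\setminus S)\cup(\N\setminus T)$ is finite as a union of two finite sets. For the Frobenius number, the key observation is that $z\notin S\cap T$ iff $z\notin S$ or $z\notin T$, so $\max\{z\in\Z\mid z\notin S\cap T\}=\max\big(\{z\mid z\notin S\}\cup\{z\mid z\notin T\}\big)=\max\{\F(S),\F(T)\}$.

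For part~2), this is essentially the characterization of minimal generators. First I would handle the case $x=0$: since $0$ is never a minimal generator and $S\setminus\{0\}$ is never a numerical semigroup (it does not contain $0$), the equivalence holds vacuously, so I may assume $x>0$. If $x\notin\msg(S)$, then $x=a+b$ with $a,b\in S\setminus\{0\}$, and then $a,b\in S\setminus\{x\}$ but $a+b=x\notin S\setminus\{x\}$, so $S\setminus\{x\}$ is not closed under addition. Conversely, if $x\in\msg(S)$, then $S\setminus\{x\}$ still contains $0$, has finite complement in $\N$, and is closed under addition: if $a,b\in S\setminus\{x\}$ and $a+b=x$, then $x$ would be a non-trivial sum of elements of $S$ (note $a,b\neq 0$ forces $a,b<x$, hence $a,b\in S$), contradicting $x\in\msg(S)$. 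Hence $S\setminus\{x\}$ is a numerical semigroup.

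For part~3), assume $S\subsetneq T$ and set $y=\min(T\setminus S)$. I must show $y\in\msg(T)$. Certainly $y\neq 0$ since $0\in S$. Suppose for contradiction $y\notin\msg(T)$; then $y=a+b$ with $a,b\in T\setminus\{0\}$, so $a,b<y$. By minimality of $y$ in $T\setminus S$, any element of $T$ that is strictly smaller than $y$ must lie in $S$; hence $a,b\in S$. But $S$ is closed under addition, so $y=a+b\in S$, contradicting $y\in T\setminus S$. Therefore $y\in\msg(T)$.

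I do not anticipate a serious obstacle here; the only point that requires a moment's care is remembering to dispose of the degenerate case $x=0$ in part~2) (and $y\neq 0$ in part~3)), and to phrase the Frobenius-number computation in part~1) in terms of complements rather than trying to compare generators directly.
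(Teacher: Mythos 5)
Your proof is correct; the paper itself omits the proof of this lemma, dismissing it as ``well known and easy to prove.'' Your arguments for all three parts are the standard ones (complements for part 1, the minimal-generator characterization for part 2, and minimality forcing the summands into $S$ for part 3), so there is nothing to reconcile with the paper's approach.
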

 
 \begin{proposition}\label{proposition16}
 	Under the standing hypothesis and notation, $\theta(\Delta)$ is a semi-covariety and $\Delta$ is its minimum.
 \end{proposition}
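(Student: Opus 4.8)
The plan is to verify the three defining conditions of a semi-covariety for the family $\theta(\Delta)$, taking $\min(\theta(\Delta))=\Delta$. First, I would observe that $\theta(\Delta)$ is non-empty since $\Delta\in\theta(\Delta)$, and that $\Delta$ is indeed its minimum with respect to inclusion: every $S\in\theta(\Delta)$ satisfies $\Delta\subseteq S$ by definition, so condition (1) is immediate. For condition (2), I would take $S,T\in\theta(\Delta)$, so $\Delta\subseteq S$ and $\Delta\subseteq T$; by part (1) of Lemma~\ref{lemma15} the intersection $S\cap T$ is a numerical semigroup, and clearly $\Delta\subseteq S\cap T$, hence $S\cap T\in\theta(\Delta)$.

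The crux is condition (3): given $S\in\theta(\Delta)$ with $S\neq\Delta$, I must exhibit $x\in\msg(S)$ with $S\setminus\{x\}\in\theta(\Delta)$, i.e.\ with $S\setminus\{x\}$ a numerical semigroup containing $\Delta$. Since $\Delta\subsetneq S$, the set $S\setminus\Delta$ is non-empty; let $x=\min(S\setminus\Delta)$. By part (3) of Lemma~\ref{lemma15}, $x\in\msg(S)$, so by part (2) of Lemma~\ref{lemma15}, $S\setminus\{x\}$ is again a numerical semigroup. It remains to check $\Delta\subseteq S\setminus\{x\}$; this holds because $x\notin\Delta$ by the choice of $x$, so removing $x$ from $S$ does not remove any element of $\Delta$. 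Hence $S\setminus\{x\}\in\theta(\Delta)$, establishing condition (3).

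I expect the only real subtlety to be the membership step $x\in\msg(S)$, which is exactly what part (3) of Lemma~\ref{lemma15} is designed to supply: the minimal element of the difference between a numerical semigroup and a strictly smaller subsemigroup cannot be written as a sum of two nonzero elements of $S$, since any such decomposition would use strictly smaller elements, which (being below $x$) must already lie in $\Delta$, forcing their sum $x$ into $\Delta$ as well — a contradiction. With that lemma in hand the argument is routine, and no separate verification of finiteness is needed here since Proposition~\ref{proposition2} already guarantees every semi-covariety is finite once the three axioms hold.
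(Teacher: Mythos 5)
Your proof is correct and follows essentially the same route as the paper: verify the minimum, use Lemma~\ref{lemma15}(1) for closure under intersection, and take $x=\min(S\setminus\Delta)$ together with Lemma~\ref{lemma15}(3) for the removal condition. Your extra remarks (invoking Lemma~\ref{lemma15}(2) explicitly and sketching why the minimal element of $S\setminus\Delta$ is a minimal generator) are sound and only make the argument slightly more detailed than the paper's.
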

 \begin{proof}
 	It is clear that $\Delta$ is the minimum of $\theta(\Delta).$ By applying {\it (1)} of Lemma \ref{lemma15}, it is easy to check that if $\{S,T\}\subseteq \theta(\Delta),$ then $S\cap T \in \theta(\Delta).$ Let us see that if $S\in \theta(\Delta)$ and $S\neq \Delta,$ then there is $x\in \msg(S)$ such that $S\backslash \{x\}\in \theta(\Delta).$ Indeed, if $S\neq \Delta$, there is $x=\min(S\backslash \Delta).$ Using  {\it (3)} of Lemma \ref{lemma15}, we deduce that $\min(S\backslash \Delta) \in \msg(S)$ and $S\backslash \{x\}\in \theta(\Delta).$
 \end{proof}
 
 It is easy to  prove the following result.
 
 \begin{proposition}\label{proposition17}
 If $S\in \theta(\Delta)$ and $S\neq \Delta,$ then $\mu(\theta(\Delta),S)=\min(S\backslash \Delta).$
 \end{proposition}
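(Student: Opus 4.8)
\textbf{Proof plan for Proposition~\ref{proposition17}.}

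The statement asserts that for $S\in\theta(\Delta)$ with $S\neq\Delta$, the quantity $\mu(\theta(\Delta),S)=\min\{x\in\msg(S)\mid S\backslash\{x\}\in\theta(\Delta)\}$ coincides with $\min(S\backslash\Delta)$. The plan is to show the two numbers are equal by comparing the sets over which the minima are taken, using the description of $\mu$ together with parts \emph{(2)} and \emph{(3)} of Lemma~\ref{lemma15}. First I would set $a=\min(S\backslash\Delta)$; this is well defined because $S\neq\Delta$ and $\Delta\subseteq S$, so $S\backslash\Delta$ is a nonempty subset of $\N$. By \emph{(3)} of Lemma~\ref{lemma15} applied to $\Delta\subsetneq S$, we have $a\in\msg(S)$, and moreover $\Delta\subseteq S\backslash\{a\}$ since $a$ is the least element of $S$ not in $\Delta$; hence $S\backslash\{a\}\in\theta(\Delta)$ (it is a numerical semigroup by \emph{(2)} of Lemma~\ref{lemma15} because $a\in\msg(S)$). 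Thus $a$ belongs to the set $\{x\in\msg(S)\mid S\backslash\{x\}\in\theta(\Delta)\}$, which gives $\mu(\theta(\Delta),S)\leq a$.

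For the reverse inequality, I would argue that every element $x$ of that defining set satisfies $x\geq a$. Suppose $x\in\msg(S)$ and $S\backslash\{x\}\in\theta(\Delta)$, so $\Delta\subseteq S\backslash\{x\}$; in particular $x\notin\Delta$. Then $x\in S\backslash\Delta$, so $x\geq\min(S\backslash\Delta)=a$ by definition of $a$. Since this holds for every $x$ in the set, the minimum $\mu(\theta(\Delta),S)$ is also $\geq a$. Combining the two inequalities yields $\mu(\theta(\Delta),S)=a=\min(S\backslash\Delta)$.

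This proof is entirely routine; there is no real obstacle. The only point requiring a moment's care is the verification that $\Delta\subseteq S\backslash\{a\}$, i.e. that removing the \emph{smallest} non-$\Delta$ element of $S$ cannot delete anything of $\Delta$ — but this is immediate from the choice of $a$, since $a\notin\Delta$. Everything else is bookkeeping with the definitions of $\mu$ and $\min(S\backslash\Delta)$ and the cited parts of Lemma~\ref{lemma15}, exactly as already used in the proof of Proposition~\ref{proposition16}.
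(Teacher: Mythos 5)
Your proof is correct and is exactly the argument the paper intends: the paper omits the proof as ``easy,'' but the key step (that $a=\min(S\backslash\Delta)$ lies in $\msg(S)$ and $S\backslash\{a\}\in\theta(\Delta)$, via parts \emph{(2)} and \emph{(3)} of Lemma~\ref{lemma15}) is already carried out in the proof of Proposition~\ref{proposition16}, and your reverse inequality is the obvious complementary observation. Nothing to add.
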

 
 As a consequence of Propositions \ref{proposition6} and \ref{proposition17}, we can state the following.  
 
 \begin{corollary}\label{corollary18}
 If $S\in \theta(\Delta),$ then  the set formed by the children of $S$ in the tree $\G(\theta(\Delta))$ is  
 $
 \{S\cup \{x\}\mid x\in \SG(S)\mbox{ and } \{s\in S\mid s<x\}\subseteq \Delta\}.
 $
 \end{corollary}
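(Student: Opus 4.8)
The plan is to combine Proposition~\ref{proposition6} with the explicit description of $\mu(\theta(\Delta),-)$ from Proposition~\ref{proposition17}. By Proposition~\ref{proposition6}, the children of $S$ in $\G(\theta(\Delta))$ are exactly the numerical semigroups of the form $S\cup\{x\}$ with $x\in\SG(S)$, $S\cup\{x\}\in\theta(\Delta)$, and $\mu(\theta(\Delta),S\cup\{x\})=x$. So the task reduces to rewriting, for a fixed $x\in\SG(S)$, the two conditions ``$S\cup\{x\}\in\theta(\Delta)$ and $\mu(\theta(\Delta),S\cup\{x\})=x$'' in the claimed simpler form ``$\{s\in S\mid s<x\}\subseteq\Delta$''.

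First I would observe that since $S\in\theta(\Delta)$ we have $\Delta\subseteq S$, and hence $S\cup\{x\}\in\theta(\Delta)$ automatically (adding an element to a semigroup already containing $\Delta$ cannot destroy that containment); moreover $x\in\SG(S)\subseteq\N\backslash S$ guarantees $S\cup\{x\}$ is a numerical semigroup via Proposition~\ref{proposition5}. Next, note that $x\notin\Delta$: if $x$ were in $\Delta$ it would be in $S$, contradicting $x\in\SG(S)$. Therefore $S\cup\{x\}\neq\Delta$, and Proposition~\ref{proposition17} applies to give $\mu(\theta(\Delta),S\cup\{x\})=\min\bigl((S\cup\{x\})\backslash\Delta\bigr)$. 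So the condition $\mu(\theta(\Delta),S\cup\{x\})=x$ becomes $\min\bigl((S\cup\{x\})\backslash\Delta\bigr)=x$.

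The final step is to see that $\min\bigl((S\cup\{x\})\backslash\Delta\bigr)=x$ is equivalent to $\{s\in S\mid s<x\}\subseteq\Delta$. Since $x\notin\Delta$, we have $(S\cup\{x\})\backslash\Delta=(S\backslash\Delta)\cup\{x\}$, so its minimum is $x$ precisely when no element of $S\backslash\Delta$ is strictly smaller than $x$, i.e.\ when every $s\in S$ with $s<x$ lies in $\Delta$. Conversely, if $\{s\in S\mid s<x\}\subseteq\Delta$ then every element of $(S\backslash\Delta)\cup\{x\}$ is $\geq x$, and since $x$ itself belongs to this set, $x$ is its minimum. Assembling these equivalences with Proposition~\ref{proposition6} yields exactly the stated description of the children.

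I do not anticipate a serious obstacle here: the work is purely a matter of unwinding definitions and substituting the formula of Proposition~\ref{proposition17}. The only point requiring a little care is the repeated use of $x\notin\Delta$ (which follows from $x\in\SG(S)$ together with $\Delta\subseteq S$) to justify both that $S\cup\{x\}\neq\Delta$—so that Proposition~\ref{proposition17} is applicable—and that $(S\cup\{x\})\backslash\Delta=(S\backslash\Delta)\cup\{x\}$ in the last step.
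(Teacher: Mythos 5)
Your proof is correct and follows exactly the route the paper intends: the corollary is stated there as an immediate consequence of Propositions \ref{proposition6} and \ref{proposition17}, and your argument simply carries out that combination, with the observation that $x\notin\Delta$ (hence $S\cup\{x\}\in\theta(\Delta)$ and $(S\cup\{x\})\backslash\Delta=(S\backslash\Delta)\cup\{x\}$) correctly supplying the details the paper leaves implicit.
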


 Let $S$ be a numerical semigroup and $n\in S\backslash \{0\}$. The
 Apéry set of $n$ in $S$ (see \cite{apery}) is
 $\Ap(S,n)=\{s\in S\mid s-n \notin S\}$.
 
Given a nonzero integer $n$ and two integers $a$ and $b,$ we write $a \equiv b\,\mod\,  n$ to
denote that $n$ divides $a-b.$ We denote by $a\, \mod\, n$ the remainder of the division
of $a$ by $n.$

  The following result is deduced from \cite[Lemma 2.4]{libro}.
 
 \begin{lemma}\label{lemma10}
 	If $S$ is a  numerical semigroup and $n\in S\backslash \{0\},$ then $\Ap(S,n)$ is a set with cardinality $n.$ Moreover, $\Ap(S,n)=\{0=w(0),w(1), \dots, w(n-1)\}$, where $w(i)$ is the least
 	element in $S$ congruent with $i$ modulo $n$, for all $i\in
 	\{0,\dots, n-1\}.$
 \end{lemma}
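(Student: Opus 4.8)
The plan is to derive this directly from the only structural hypothesis available, namely that $\N\backslash S$ is finite, by working one residue class modulo $n$ at a time.

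First I would fix $i\in\{0,1,\dots,n-1\}$ and note that, since $\N\backslash S$ is finite, all but finitely many of the integers $i,i+n,i+2n,\dots$ lie in $S$; in particular the set of elements of $S$ that are congruent with $i$ modulo $n$ is a non-empty subset of $\N$, hence has a least element, which we call $w(i)$. This gives well-defined numbers $w(0),\dots,w(n-1)$, and they are pairwise distinct because they occupy distinct residue classes modulo $n$. Also $w(0)=0$, since $0\in S$ and $0$ is the smallest non-negative integer.

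Next I would prove $\Ap(S,n)=\{w(0),\dots,w(n-1)\}$ by double inclusion. For $\supseteq$: $w(i)\in S$ by construction, and $w(i)-n\notin S$, because either $w(i)-n<0$ (so it is not in $S\subseteq\N$) or else $w(i)-n$ is a non-negative integer congruent with $i$ modulo $n$ that is strictly smaller than $w(i)$, contradicting the minimality of $w(i)$; hence $w(i)\in\Ap(S,n)$. For $\subseteq$: given $s\in\Ap(S,n)$, set $i=s\,\mod\,n$; since $s\in S$ and $s\equiv i$, minimality gives $s\ge w(i)$, and if $s>w(i)$ then $s-w(i)$ is a positive multiple of $n$, so $s-n=w(i)+kn$ for some $k\in\N$, whence $s-n\in S$ (as $w(i)\in S$ and $n\in S$), contradicting $s\in\Ap(S,n)$; therefore $s=w(i)$.

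Finally the cardinality statement follows immediately: $\Ap(S,n)$ has just been identified with a set of $n$ pairwise distinct elements, so $\#\Ap(S,n)=n$. The only point requiring a little care is the boundary case $w(i)-n<0$ in the inclusion $\supseteq$, and the fact that "least element in the class" is legitimate at all, which is precisely where the numerical-semigroup hypothesis enters; beyond that I do not anticipate any real obstacle, the argument being routine.
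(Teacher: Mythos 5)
Your proof is correct and complete: the existence of $w(i)$ via finiteness of $\N\backslash S$, the double inclusion, and the careful handling of the case $w(i)-n<0$ are all in order. The paper gives no proof of this lemma, simply citing \cite[Lemma 2.4]{libro}, and your argument is essentially the standard one appearing in that reference.
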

 \begin{remark}\label{nota20} Notice that:
 	\begin{enumerate}
 		\item In \cite[Remark 1]{covariedades} it is shown that if $S$ is a numerical semigroup and we know $\Ap(S,n)$ for some $n\in S\backslash \{0\},$ then we easily compute $\SG(S).$ 
 			\item In \cite[Remark 2]{covariedades} it is displayed that if $S$ is a numerical semigroup and we know $\Ap(S,n)$ for some $n\in S\backslash \{0\},$ then we easily calculate $\Ap(S\cup \{x\},n)$ for all  $x\in\SG(S).$
 	\end{enumerate}
 	
 \end{remark}

We have now all the ingredients needed to compute the semi-covariety $\theta(\Delta).$
 
 	\begin{algorithm}\label{algorithm21}
 		\caption{Computation of $\theta(\Delta)$}\mbox{}\par
 	\noindent\textsc{Input}: A numerical semigroup  $\Delta.$ \\
 	\noindent\textsc{Output}: $\theta(\Delta).$
 	\begin{algorithmic}
 		\item[(1)] $\theta(\Delta)=\{\Delta\},$ $B=\{\Delta\}$ and $\Ap(\Delta, \m(\Delta)).$ 
 		\item[(2)] For every $S \in B,$ compute $$H(S)=\{x\in \SG(S)\mid  \{s\in S\mid s<x\}\subseteq \Delta\}.$$
 		\item[(3)] If $\displaystyle\bigcup_{S\in B}H(S)=\emptyset,$ then return $\theta(\Delta).$
 		\item[(4)]  $C=\displaystyle\bigcup_{S\in B}\{S\cup \{x\}\mid x\in H(S)\}.$ 	
 		\item[(5)]  $\theta(\Delta):= \theta(\Delta)\cup C,$ $B:=C.$
 		\item[(6)] For all $S\in B,$ compute $\Ap(S,\m(\Delta))$  and   go to Step $(2).$ 	
 	\end{algorithmic}
 \end{algorithm}
 
 We are going to illustrate the usage of the previous Algorithm   with\, an example.
 
 \begin{example}
 	Let $\Delta =\langle 3,7,8\rangle= \{0,3,6,\rightarrow\}.$ 
 	\begin{itemize}
 		\item $\theta(\Delta) =\{\Delta\}$, $B=\{\Delta\}$ and  $\Ap(\Delta,3)=\{0,7,8\}.$
 		\item $H(\Delta)=\{4,5\}.$ 
 		\item $C=\{ \Delta \cup \{4\}, \Delta \cup \{5\}\}.$
 		\item $\theta(\Delta)=\{\Delta, \Delta \cup \{4\},\Delta \cup \{5\}\} $ and  $B=\{\Delta \cup \{4\},\Delta \cup \{5\}\}.$ 
 		\item $\Ap(\Delta \cup \{4\},3)=\{0,4,8\}$ and $\Ap(\Delta \cup \{5\},3)=\{0,5,7\}.$
 		\item $H(\Delta \cup \{4\})=\emptyset $ and $H(\Delta \cup \{5\})=\{4\}.$
 		\item  $C=\{ \Delta \cup \{4,5\}\}.$
 		\item $\theta(\Delta)=\{\Delta, \Delta \cup \{4\},\Delta \cup \{5\}, \Delta \cup \{4,5\} \} $ and  $B=\{\Delta \cup \{4,5\}\}.$ 
 		\item $\Ap(\Delta \cup \{4,5\},3)=\{0,4,5\}.$ 
 		\item $H(\Delta \cup \{4,5\})=\{2\}.$
 		\item  $C=\{ \Delta \cup \{2,4,5\}\}.$
 		\item $\theta(\Delta)=\{\Delta, \Delta \cup \{4\},\Delta \cup \{5\}, \Delta \cup \{4,5\}, \Delta \cup \{2,4,5\} \} $ and  $B=\{\Delta \cup \{2,4,5\}\}.$ 
 		\item $\Ap(\Delta \cup \{2,4,5\},3)=\{0,2,4\}.$ 
 		\item $H(\Delta \cup \{2,4,5\})=\{1\}.$
 		 \item $C=\{ \N\}.$
 		 \item $\theta(\Delta)=\{\Delta, \Delta \cup \{4\},\Delta \cup \{5\}, \Delta \cup \{4,5\}, \Delta \cup \{2,4,5\}, \N \} $ and  $B=\{\N\}.$ 
 		\item $\Ap(\N,3)=\{0,1,2\}.$ 
 		\item $H(\N)=\emptyset.$ 		
 		\item Algorithm 2, returns $\theta(\Delta)=\{\Delta, \Delta \cup \{4\}, \Delta \cup \{5\},\Delta \cup \{4,5\},\Delta \cup \{2,4,5\}, \N\}.$
 		
 	\end{itemize}
 \end{example}
 
 The following result has a trivial proof.
 
 \begin{proposition}\label{proposition23}Under the standing  notation,
 	$X$ is a $\theta(\Delta)$-set if and only if $X\subseteq \N\backslash \Delta.$
 \end{proposition}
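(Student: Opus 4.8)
The plan is to prove Proposition~\ref{proposition23} by a direct double inclusion, relying only on the definition of $\theta(\Delta)$-set together with Proposition~\ref{proposition16} (which guarantees $\Delta$ is the minimum of $\theta(\Delta)$) and Lemma~\ref{lemma15}.

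First I would handle the forward implication. Suppose $X$ is a $\theta(\Delta)$-set. By definition this means $X\cap\min(\theta(\Delta))=\emptyset$ and $X\subseteq S$ for some $S\in\theta(\Delta)$. Since $\min(\theta(\Delta))=\Delta$, the first condition is exactly $X\cap\Delta=\emptyset$, i.e. $X\subseteq\N\backslash\Delta$. (Note the ambient set is implicitly $\N$, because every element of a numerical semigroup lies in $\N$; so $X\subseteq S\subseteq\N$ and $X\cap\Delta=\emptyset$ together give $X\subseteq\N\backslash\Delta$.)

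For the converse, suppose $X\subseteq\N\backslash\Delta$. I must produce a numerical semigroup $S\in\theta(\Delta)$ with $X\subseteq S$, and also check $X\cap\Delta=\emptyset$; the latter is immediate from $X\subseteq\N\backslash\Delta$. For the former, the natural candidate is $S=\N$: it is a numerical semigroup, it contains $\Delta$, hence $\N\in\theta(\Delta)$, and trivially $X\subseteq\N$. Thus $X$ is a $\theta(\Delta)$-set. One should also dispose of the degenerate case $X=\emptyset$, which satisfies both the definition of $\theta(\Delta)$-set and the condition $X\subseteq\N\backslash\Delta$ vacuously.

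There is essentially no obstacle here: the proposition is, as the paper says, trivial. The only thing worth being careful about is the convention that $\theta(\Delta)$-sets live inside $\N$ and that $\min(\theta(\Delta))=\Delta$, both of which are already established (the latter in Proposition~\ref{proposition16}). If one wanted a slightly less wasteful witness than $\N$ in the converse direction, one could take $S=\langle X\rangle+\Delta$ and invoke the forthcoming description $\theta(\Delta)[X]=\langle X\rangle+\Delta$, but using $\N$ keeps the argument self-contained and avoids any forward reference.
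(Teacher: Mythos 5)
Your argument is correct and is exactly the routine verification the paper omits as trivial: the forward direction is just the definition of a $\theta(\Delta)$-set together with $\min(\theta(\Delta))=\Delta$, and the converse uses $\N\in\theta(\Delta)$ as a witness. Nothing to add.
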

 \begin{proposition}\label{proposition24}
 	If $S\in \theta(\Delta),$ then $A=\{x\in \msg(S)\mid x\notin \Delta\}$ is the unique $\theta(\Delta)$-minimal system of generators of $S.$
 \end{proposition}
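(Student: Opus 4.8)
The plan is to show two things: first that $A=\{x\in\msg(S)\mid x\notin\Delta\}$ is a $\theta(\Delta)$-system of generators of $S$, and second that it is the unique $\theta(\Delta)$-minimal one. The first assertion is immediate from Proposition \ref{proposition10} together with the identification of $\min(\theta(\Delta))$ with $\Delta$ given by Proposition \ref{proposition16}: indeed, Proposition \ref{proposition10} says precisely that $\{x\in\msg(S)\mid x\notin\min(\CF)\}$ is an $\CF$-set with $\CF[A]=S$, and here $\min(\theta(\Delta))=\Delta$. So $\theta(\Delta)[A]=S$, and $A$ is a $\theta(\Delta)$-set by Proposition \ref{proposition23} since $A\subseteq\N\backslash\Delta$.

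For minimality and uniqueness, the key is Lemma \ref{lemma13}. Suppose $B$ is any $\theta(\Delta)$-minimal system of generators of $S$. First I would show $A\subseteq B$: take $x\in A$, so $x\in\msg(S)$ and $x\notin\Delta$. By Lemma \ref{lemma15}(2), $S\backslash\{x\}$ is a numerical semigroup; moreover $\Delta\subseteq S\backslash\{x\}$ because $x\notin\Delta$, so $S\backslash\{x\}\in\theta(\Delta)$. Since $B$ is a $\theta(\Delta)$-set with $\theta(\Delta)[B]=S$, Lemma \ref{lemma13} applies and yields $x\in B$. Hence $A\subseteq B$. But $A$ is itself a $\theta(\Delta)$-system of generators of $S$, and $B$ is $\theta(\Delta)$-\emph{minimal}, so we cannot have $A\subsetneq B$; therefore $B=A$. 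This simultaneously proves that $A$ is the unique $\theta(\Delta)$-minimal system of generators (in particular, that $A$ itself is minimal, since any $B\varsubsetneq A$ cannot generate $S$, as otherwise a $\theta(\Delta)$-minimal system contained in $B$ would be a minimal system strictly smaller than $A$, contradicting $A\subseteq B$).

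I do not anticipate a serious obstacle here; the statement is essentially a bookkeeping consequence of the general machinery of Section 2 specialized to $\theta(\Delta)$, combined with the elementary fact (Lemma \ref{lemma15}(2)) that removing a minimal generator of a numerical semigroup again gives a numerical semigroup. The only point requiring a little care is the logical structure of the uniqueness argument: one must be careful to first establish $A\subseteq B$ for \emph{every} $\theta(\Delta)$-minimal system of generators $B$, and only then conclude equality from minimality of $B$; trying to argue minimality of $A$ directly first is slightly more awkward. The cleanest route is the one above: prove containment $A\subseteq B$ first, deduce $B=A$, and observe that this forces $A$ to be the (unique) minimal one.
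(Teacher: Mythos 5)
Your proposal is correct and follows essentially the same route as the paper: establish via Proposition \ref{proposition10} that $A$ is a $\theta(\Delta)$-system of generators, and then show that every system of generators $B$ of $S$ must contain $A$ because $a\in A\setminus B$ would give $B\subseteq S\setminus\{a\}\in\theta(\Delta)$, contradicting $\theta(\Delta)[B]=S$ --- which is exactly the content of Lemma \ref{lemma13} that you invoke. The only (harmless) difference is that the paper proves $A\subseteq B$ for \emph{every} system of generators $B$ rather than only the minimal ones, which makes the final uniqueness-plus-minimality bookkeeping slightly more direct than your closing parenthetical.
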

 \begin{proof}
 	By Proposition \ref{proposition10}, we deduce that $A$ is a $\theta(\Delta)$-system of generators of $S.$ 
 	To conclude the proof we will see that if  $B$ is a $\theta(\Delta)$-system of generators of $S,$ then $A\subseteq B.$ 
 	In fact, if  $A  \nsubseteq B,$	 there is $a\in A \backslash B.$ It is clear that $S\backslash \{a\}\in \theta(\Delta)$ and $B\subseteq S\backslash \{a\}.$ Therefore, by applying Proposition \ref{proposition8}, $S=\theta(\Delta)[B]\subseteq S\backslash \{a\},$ which is impossible. 
 \end{proof}

If $S\in \theta(\Delta),$ we denote by $\theta(\Delta)\msg(S)$ the unique $\theta(\Delta)$-minimal system of generators of $S.$ Notice that the cardinality of $\theta(\Delta)\msg(S)$ is equal to $\theta(\Delta)\rank(S).$

\begin{example}
	It is clear that $S=\langle 3,4,5 \rangle \in \theta(\langle 5,7\rangle).$ By applying Proposition \ref{proposition24}, we have $\theta(\langle 5,7\rangle)\msg(S)=\{3,4\}.$ Thus, $\theta(\langle 5,7 \rangle)\rank(S)=2.$
\end{example}

If $A$ and $B$ are non-empty subsets of $\Z$, we write $A+B=\{a+b \mid a\in A, b\in B\}.$ 

\begin{proposition}\label{proposition26}
	If $\emptyset \neq X \subseteq \N\backslash \Delta,$ then $\theta(\Delta)[X]=\Delta+\langle X \rangle.$
\end{proposition}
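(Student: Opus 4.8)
\textbf{Proof proposal for Proposition~\ref{proposition26}.}

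The plan is to prove the two inclusions $\Delta+\langle X\rangle\subseteq \theta(\Delta)[X]$ and $\theta(\Delta)[X]\subseteq \Delta+\langle X\rangle$ separately, after first checking that $\Delta+\langle X\rangle$ is a legitimate object, i.e.\ a numerical semigroup belonging to $\theta(\Delta)$. For the latter, note that $\Delta+\langle X\rangle$ is closed under addition (a sum of sums is a sum), contains $0=0+0$, and contains $\Delta=\Delta+\{0\}$; since $\Delta$ is a numerical semigroup, $\N\backslash\Delta$ is finite and hence so is $\N\backslash(\Delta+\langle X\rangle)$. Thus $\Delta+\langle X\rangle$ is a numerical semigroup containing $\Delta$, so it lies in $\theta(\Delta)$.

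For the inclusion $\theta(\Delta)[X]\subseteq \Delta+\langle X\rangle$: by Proposition~\ref{proposition23}, $X$ is a $\theta(\Delta)$-set (this uses $X\subseteq\N\backslash\Delta$), so $\theta(\Delta)[X]$ is defined, and by Proposition~\ref{proposition8} it is the least element of $\theta(\Delta)$ containing $X$. Since $\Delta+\langle X\rangle\in\theta(\Delta)$ and clearly $X\subseteq \Delta+\langle X\rangle$ (as $x=0+x$ for $x\in X$), minimality of $\theta(\Delta)[X]$ gives $\theta(\Delta)[X]\subseteq\Delta+\langle X\rangle$.

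For the reverse inclusion $\Delta+\langle X\rangle\subseteq\theta(\Delta)[X]$: set $S=\theta(\Delta)[X]$. Then $S\in\theta(\Delta)$, so $\Delta\subseteq S$, and $X\subseteq S$ by definition of $\theta(\Delta)[X]$. Since $S$ is a submonoid of $(\N,+)$ containing $X$, it contains $\langle X\rangle$; and being closed under addition and containing both $\Delta$ and $\langle X\rangle$, it contains $\Delta+\langle X\rangle$. Combining the two inclusions yields $\theta(\Delta)[X]=\Delta+\langle X\rangle$.

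I do not expect any real obstacle here; the only points requiring a moment's care are the verification that $\Delta+\langle X\rangle$ is genuinely a numerical semigroup (so that it is a candidate element of $\theta(\Delta)$) and the correct invocation of Proposition~\ref{proposition8} in both directions. The whole argument is the standard ``closure operator agrees with the explicit generated object'' pattern, so the write-up should be short.
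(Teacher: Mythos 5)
Your proof is correct and follows essentially the same route as the paper: both directions are obtained from Proposition~\ref{proposition8} after checking that $\Delta+\langle X\rangle$ is a numerical semigroup in $\theta(\Delta)$ containing $X$, and that any element of $\theta(\Delta)$ containing $X$ must contain $\Delta+\langle X\rangle$. Your write-up is merely more explicit about the verification that $\Delta+\langle X\rangle$ is a numerical semigroup, which the paper dismisses as clear.
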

\begin{proof}
	It is clear that $\Delta+\langle X \rangle$ is a numerical semigroup containing  $\Delta.$ Thus, by Proposition \ref{proposition8}, we have $\theta(\Delta)[X]\subseteq \Delta + \langle X \rangle.$ 
	
	If $S\in \theta(\Delta)$ and $X\subseteq S,$ then $\Delta\subseteq S$ and $\langle X \rangle\subseteq S.$ So, $\Delta+\langle X\rangle\subseteq S$.  By applying again Proposition \ref{proposition8}, we have $\Delta+\langle X \rangle \subseteq \theta(\Delta)[X].$
\end{proof}

\begin{example}\label{example27}
	Let $\Delta=\langle 5,7,9 \rangle$ and $X=\{4,6\}.$ Then by Proposition \ref{proposition26}, we know that $\theta(\Delta)[X]=\langle 5,7,9\rangle + \langle 4,6 \rangle=\langle 4,5,6,7,9 \rangle=\langle 4,5,6,7 \rangle.$
\end{example}
The next result follows easily from Propositions \ref{proposition23} and \ref{proposition26}. 

\begin{proposition}\label{proposition28}
	Under the standing notation, $\{S\in \theta(\Delta)\mid \theta(\Delta)\rank(S)=1\}=\{\theta(\Delta)[\{x\}]\mid x\in \N \backslash \Delta\}.$
	Moreover, if $\{x,y\}\subseteq \N\backslash \Delta,$ then $\theta(\Delta[\{x\}])=\theta(\Delta[\{y\}])$ if and only if $x=y.$
\end{proposition}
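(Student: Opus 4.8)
The plan is to prove the two assertions of Proposition~\ref{proposition28} separately, leaning on the characterization of $\theta(\Delta)$-sets from Proposition~\ref{proposition23} and the formula $\theta(\Delta)[X]=\Delta+\langle X\rangle$ from Proposition~\ref{proposition26}.

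For the first equality, I would argue by double inclusion. If $S\in\theta(\Delta)$ with $\theta(\Delta)\rank(S)=1$, then by definition there is a $\theta(\Delta)$-set $X$ with $\#X=1$ and $\theta(\Delta)[X]=S$; writing $X=\{x\}$, Proposition~\ref{proposition23} gives $x\in\N\backslash\Delta$, so $S=\theta(\Delta)[\{x\}]$ lies in the right-hand set. Conversely, take $x\in\N\backslash\Delta$ and set $S=\theta(\Delta)[\{x\}]$. By Proposition~\ref{proposition23}, $\{x\}$ is a $\theta(\Delta)$-set, so $\theta(\Delta)\rank(S)\le 1$; and $\theta(\Delta)\rank(S)=0$ would force $S=\Delta$ by Lemma~\ref{lemma12}, which is impossible since $x\in S\backslash\Delta$ by Proposition~\ref{proposition26} (indeed $x\in\langle\{x\}\rangle\subseteq\Delta+\langle\{x\}\rangle=S$). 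Hence $\theta(\Delta)\rank(S)=1$ and $S$ belongs to the left-hand set.

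For the second assertion, the nontrivial direction is that $\theta(\Delta)[\{x\}]=\theta(\Delta)[\{y\}]$ implies $x=y$ for $x,y\in\N\backslash\Delta$. By Proposition~\ref{proposition26} this equality reads $\Delta+\langle x\rangle=\Delta+\langle y\rangle=:S$. The key observation is that $x$ is recoverable from $S$: since both $\{x\}$ and $\{y\}$ are $\theta(\Delta)$-minimal systems of generators of $S$ (each has a single element not in $\Delta$, namely itself, and a singleton cannot have a proper subset generating $S$), Proposition~\ref{proposition24} — uniqueness of the $\theta(\Delta)$-minimal system of generators — forces $\{x\}=\{y\}$, i.e. $x=y$. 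Alternatively and more concretely, one can note $x=\min(S\backslash\Delta)=y$ because $S\backslash\Delta=(\Delta+\langle x\rangle)\backslash\Delta$ has $x$ as its least element (every element of $\Delta+\langle x\rangle$ not in $\Delta$ is of the form $\delta+kx$ with $k\ge1$, hence $\ge x$). The reverse implication is trivial.

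The main obstacle, such as it is, is purely bookkeeping: making sure that the $\theta(\Delta)$-rank of $\theta(\Delta)[\{x\}]$ is exactly $1$ and not $0$, which requires checking $x\notin\Delta$ is genuinely used to separate it from $\min(\theta(\Delta))=\Delta$. Everything else is a direct invocation of Propositions~\ref{proposition23}, \ref{proposition24} and~\ref{proposition26} together with Lemma~\ref{lemma12}, so the proof is short. I would present the "recover $x$ as $\min(S\backslash\Delta)$" argument as the cleanest route for the injectivity claim, since it avoids appealing to the uniqueness proposition and is self-contained.
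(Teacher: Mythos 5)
Your proof is correct and follows essentially the route the paper intends: the paper omits the argument, saying only that the result "follows easily from Propositions \ref{proposition23} and \ref{proposition26}," and your double-inclusion argument plus the recovery of $x$ as $\min(S\backslash\Delta)$ from $S=\Delta+\langle x\rangle$ is exactly the natural fleshing-out of that remark. The only extra care you take (ruling out rank $0$ via Lemma \ref{lemma12}) is a genuine, if small, point that the paper glosses over.
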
 

As an immediate consequence of Proposition \ref{proposition28}, we can state the following. 

\begin{corollary}\label{corollary29}
	The cardinality of $\{S\in \theta(\Delta)\mid \theta(\Delta)\rank(S)=1\}$ is equal to $\g(\Delta).$
\end{corollary}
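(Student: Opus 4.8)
\textbf{Plan for the proof of Corollary \ref{corollary29}.}
The plan is to count directly, using Proposition \ref{proposition28} as the structural backbone. That proposition already identifies the set $\{S\in \theta(\Delta)\mid \theta(\Delta)\rank(S)=1\}$ with the image of the map $x \mapsto \theta(\Delta)[\{x\}]$ where $x$ ranges over $\N\backslash \Delta$, and it asserts that this map is injective (i.e., $\theta(\Delta)[\{x\}]=\theta(\Delta)[\{y\}]$ forces $x=y$). So the cardinality of the target set equals the cardinality of the index set $\N\backslash \Delta$.

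First I would invoke Proposition \ref{proposition28} to write
$$
\#\{S\in \theta(\Delta)\mid \theta(\Delta)\rank(S)=1\} = \#\{\theta(\Delta)[\{x\}]\mid x\in \N\backslash \Delta\}.
$$
Next I would use the injectivity clause of the same proposition: since distinct $x$ give distinct semigroups $\theta(\Delta)[\{x\}]$, the map $x\mapsto \theta(\Delta)[\{x\}]$ is a bijection from $\N\backslash \Delta$ onto $\{\theta(\Delta)[\{x\}]\mid x\in \N\backslash \Delta\}$. Hence the right-hand side above equals $\#(\N\backslash \Delta)$, which by definition is $\g(\Delta)$. This completes the argument.

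There is essentially no obstacle here: the corollary is an immediate consequence of Proposition \ref{proposition28}, and indeed the excerpt flags it as such ("As an immediate consequence of Proposition \ref{proposition28}"). The only thing worth a sentence of care is making explicit that $\N\backslash \Delta$ is finite (it is, since $\Delta$ is a numerical semigroup), so that "cardinality" and the genus $\g(\Delta)=\#(\N\backslash\Delta)$ are well-defined finite numbers and the bijection genuinely transfers the count. If one wanted to be fully self-contained rather than lean on Proposition \ref{proposition28}, one could instead re-derive both the surjectivity (every rank-$1$ element is $\theta(\Delta)[\{\mu(\theta(\Delta),S)\}]$ by Proposition \ref{proposition14}, and $\mu(\theta(\Delta),S)\notin\Delta$) and the injectivity (if $\theta(\Delta)[\{x\}]=\theta(\Delta)[\{y\}]=S$ with $x,y\in\N\backslash\Delta$, then by Proposition \ref{proposition26} $S=\Delta+\langle x\rangle=\Delta+\langle y\rangle$; comparing the smallest element of $S\backslash\Delta$ forces $x=y$), but this merely reproves Proposition \ref{proposition28} and is not needed.
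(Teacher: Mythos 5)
Your argument is exactly the paper's: Corollary \ref{corollary29} is stated there as an immediate consequence of Proposition \ref{proposition28}, using precisely the surjectivity and injectivity of $x\mapsto\theta(\Delta)[\{x\}]$ on $\N\backslash\Delta$ to identify the count with $\#(\N\backslash\Delta)=\g(\Delta)$. The proposal is correct and adds only the (harmless) remark that $\N\backslash\Delta$ is finite.
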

\begin{example}\label{example30}
	If $\Delta=\langle 5,7,9\rangle=\{0,5,7,9,10,12,14, \rightarrow\},$ then $\N\backslash \Delta=\{1,2,3,4,6,\\ 8,11,13\}$ and $\g(\Delta)=8.$ Then, by applying Propositions \ref{proposition26} and \ref{proposition28} we have that $\{S\in \theta(\Delta)\mid \theta(\Delta)\rank(S)=1\}=\{\theta(\Delta)[\{1\}]=\N, \theta(\Delta)[\{2\}]=\langle 2,5 \rangle,  \theta(\Delta)[\{3\}]=\langle 3,5,7 \rangle, \theta(\Delta)[\{4\}]=\langle 4,5,7 \rangle, \theta(\Delta)[\{6\}]=\langle 5,6,7,9 \rangle, \theta(\Delta)[\{8\}]=\langle 5,7,8,9 \rangle, \theta(\Delta)[\{11\}]=\langle 5,7,9,11 \rangle$ and $\theta(\Delta)[\{13\}]=\langle 5,7,9,13 \rangle.$
\end{example}
\section{The semi-covariety $\CC(\F)$}

Throughout  this section $F$ will denote a positive integer and $\CC(F)=\{S\mid S \mbox{ is a Coe-semigroup and }\F(S)=F\}.$ The following result is \cite[Proposition 1]{coe}.

\begin{proposition}\label{proposition31}
	If $S$ is a Coe-semigroup and $S\neq \N,$ then $\m(S)$ is even and $\F(S)$ is odd.
\end{proposition}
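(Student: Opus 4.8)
The plan is to prove the two assertions separately, starting from the definition of a Coe-semigroup and the hypothesis $S \neq \N$, which guarantees $F = \F(S)$ is a well-defined nonnegative integer with $F \notin S$.

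First I would establish that $\F(S)$ is odd. Suppose, for contradiction, that $F = \F(S)$ is even. Since $F = \max\{z \in \Z \mid z \notin S\}$, every integer strictly greater than $F$ lies in $S$; in particular $F + 1 \in S$. Now $F + 1$ is odd (as $F$ is even), so by the defining property of a Coe-semigroup, $\{(F+1)-1, F+1, (F+1)+1\} \subseteq S$, which forces $F \in S$. This contradicts $F \notin S$. Hence $\F(S)$ must be odd. Note this recovers \cite[Proposition 2]{coe}, which is cited in the introduction; I would present it via this short self-contained argument.

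Next I would show that $\m(S)$ is even. Here the key observation is that $\m(S) = \min(S \setminus \{0\})$, and since $S \neq \N$ we have $\m(S) \geq 2$. Suppose $\m(S)$ is odd; then $\m(S) \geq 3$ and $\m(S) \in S$ is odd, so the Coe-condition gives $\{\m(S)-1, \m(S), \m(S)+1\} \subseteq S$. In particular $\m(S) - 1 \in S$ with $0 < \m(S) - 1 < \m(S)$, contradicting the minimality of $\m(S)$ as the smallest nonzero element of $S$. Therefore $\m(S)$ is even.

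The argument is elementary and I do not anticipate a serious obstacle; the only point requiring care is making sure the hypothesis $S \neq \N$ is used to justify both that $F$ is a genuine (finite, nonnegative) Frobenius number with $F \notin S$ and that $\m(S) \geq 2$, so that the "smaller element" produced by the Coe-condition is nonzero and genuinely violates minimality. I would state both deductions in one short paragraph each, and remark that the oddness of $\F(S)$ is exactly \cite[Proposition 2]{coe} so only the evenness of $\m(S)$ is new here.
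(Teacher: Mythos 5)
Your proof is correct. The paper itself gives no argument here---it simply cites \cite[Proposition 1]{coe}---and your two short contradictions (an even $F$ would force $F+1\in S$ odd and hence $F\in S$; an odd $\m(S)\ge 3$ would force $\m(S)-1\in S\setminus\{0\}$ below the multiplicity) are exactly the direct verification one expects, with the hypothesis $S\neq\N$ correctly invoked to ensure $F\ge 1$ and $\m(S)\ge 2$.
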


The proof of the following result is trivial.

\begin{lemma}\label{lemma32} With the previous notation, 
	\begin{enumerate}
		\item[1)] If $F$ is odd, then $\Delta(F)=\{0,F+1,\rightarrow\}$ is the minimum of $\CC(F).$
		\item[2)] $\CC(F)\neq \emptyset$ if and only if $F$ is odd.
	\end{enumerate}
\end{lemma}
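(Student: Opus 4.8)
\textbf{Proof plan for Lemma \ref{lemma32}.}

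The plan is to prove the two items together, since item 2) follows once item 1) is established in one direction and the parity obstruction is isolated in the other. First I would recall from Proposition \ref{proposition31} (that is, \cite[Proposition 1]{coe}) that every Coe-semigroup $S$ with $S\neq\N$ has $\F(S)$ odd; in particular, if $\CC(F)\neq\emptyset$ then either $F$ is odd or $F=\F(\N)=-1$, and since $F$ is assumed positive, $\CC(F)\neq\emptyset$ forces $F$ odd. This already gives the ``only if'' direction of item 2).

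For the ``if'' direction of item 2) and all of item 1), assume $F$ is odd and set $\Delta(F)=\{0,F+1,\rightarrow\}$. I would first check that $\Delta(F)$ is a numerical semigroup: it is closed under addition (any sum of two elements either is $0$ or is $\geq F+1$, and $\N\setminus\Delta(F)=\{1,\dots,F\}$ is finite) and $\F(\Delta(F))=F$. Next I would verify that $\Delta(F)$ is a Coe-semigroup: the only odd elements of $\Delta(F)$ are the odd integers $x\geq F+1$; since $F$ is odd, $F+1$ is even, so every odd $x\in\Delta(F)$ satisfies $x\geq F+2$, whence $\{x-1,x,x+1\}\subseteq\{F+1,\rightarrow\}\subseteq\Delta(F)$. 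Thus $\Delta(F)\in\CC(F)$, so $\CC(F)\neq\emptyset$, finishing item 2).

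Finally, for the minimality in item 1), let $S\in\CC(F)$ be arbitrary. Since $\F(S)=F$, every integer $>F$ lies in $S$, i.e. $\{F+1,\rightarrow\}\subseteq S$, and of course $0\in S$; hence $\Delta(F)=\{0,F+1,\rightarrow\}\subseteq S$. Therefore $\Delta(F)$ is contained in every element of $\CC(F)$, and being itself a member of $\CC(F)$ it is the minimum with respect to inclusion. I do not anticipate any genuine obstacle here: the only point requiring the slightest care is observing that the hypothesis ``$F$ odd'' makes $F+1$ even, which is exactly what rules out any problematic odd element of $\Delta(F)$ near the Frobenius number and makes the Coe condition vacuously easy to satisfy.
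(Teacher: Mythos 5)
Your proposal is correct and fills in exactly the routine verifications the paper omits when it declares the proof of Lemma \ref{lemma32} trivial: the ``only if'' direction via Proposition \ref{proposition31} together with positivity of $F$, the Coe property of $\Delta(F)$ from the parity of $F+1$, and minimality from $\{F+1,\rightarrow\}\subseteq S$ for any $S$ with $\F(S)=F$. This is the same (implicit) argument the paper intends, so there is nothing to add.
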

In \cite[Proposition 28]{coe} appears the following lemma.
\begin{lemma}\label{lemma33}
	The intersection of finitely many Coe-semigroups is a Coe-semigroup.	
\end{lemma}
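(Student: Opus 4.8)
The plan is to reduce to the binary case and then verify the defining property directly. First I would note that, by an obvious induction on the number of semigroups, it suffices to prove that if $S$ and $T$ are Coe-semigroups then $S\cap T$ is a Coe-semigroup; the statement for a finite family $S_1,\dots,S_n$ then follows from $S_1\cap\cdots\cap S_n=(S_1\cap\cdots\cap S_{n-1})\cap S_n$ together with the fact that an intersection of two Coe-semigroups is again one, applied $n-1$ times.

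Next I would check that $S\cap T$ is a numerical semigroup at all. Since $0\in S$ and $0\in T$ we have $0\in S\cap T$, and $S\cap T$ is closed under addition because both $S$ and $T$ are; hence $S\cap T$ is a submonoid of $(\N,+)$. Moreover $\N\setminus(S\cap T)=(\N\setminus S)\cup(\N\setminus T)$, which is finite because $S$ and $T$ are numerical semigroups, so $S\cap T$ has finite complement and is therefore a numerical semigroup. (This is essentially the content of item $(1)$ of Lemma \ref{lemma15}.)

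Finally I would verify the coated-odd condition. Let $x\in S\cap T$ be odd. Since $x\in S$ and $S$ is a Coe-semigroup, $\{x-1,x,x+1\}\subseteq S$; since $x\in T$ and $T$ is a Coe-semigroup, $\{x-1,x,x+1\}\subseteq T$. Hence $\{x-1,x,x+1\}\subseteq S\cap T$, which is precisely the condition required for $S\cap T$ to be a Coe-semigroup, and the proof is complete.

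I do not expect any genuine obstacle here: the defining property of a Coe-semigroup is a Horn-type implication (``if $x\in S$ is odd, then three specified elements lie in $S$''), and conditions of this shape are automatically inherited by arbitrary intersections. The only place where the hypothesis on the \emph{family} enters is the preservation of the ``numerical semigroup'' property, i.e.\ the finiteness of the complement, which needs the family to be finite; everything else is a one-line verification.
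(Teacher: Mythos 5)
Your proof is correct; the paper does not actually prove this lemma but simply quotes it from \cite[Proposition 28]{coe}, and your argument (reduce to two semigroups, note the complement of the intersection is a finite union of finite sets, and observe that the coated-odd condition is a Horn-type implication preserved under intersection) is the standard verification one would expect to find there. Your closing remark correctly identifies the only place finiteness of the family is used, namely keeping the complement finite.
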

\begin{proposition}\label{proposition34}
	If $F$ is odd, then $\CC(F)$ is a semi-covariety. Moreover, $\Delta(F)$ is its minimum.
\end{proposition}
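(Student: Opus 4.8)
The plan is to verify the three defining properties of a semi-covariety for $\CC(F)$ when $F$ is odd, reusing the structural lemmas already available. By Lemma \ref{lemma32}, $\Delta(F)=\{0,F+1,\rightarrow\}$ is a Coe-semigroup with Frobenius number $F$, so $\Delta(F)\in\CC(F)$, and since every $S\in\CC(F)$ is a numerical semigroup with $\F(S)=F$ we have $\{0,F+1,\rightarrow\}\subseteq S$, hence $\Delta(F)\subseteq S$; this gives property 1) with $\min(\CC(F))=\Delta(F)$. For property 2), if $\{S,T\}\subseteq\CC(F)$ then $S\cap T$ is a Coe-semigroup by Lemma \ref{lemma33}, and by part (1) of Lemma \ref{lemma15} we get $\F(S\cap T)=\max\{\F(S),\F(T)\}=F$, so $S\cap T\in\CC(F)$.

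The real content is property 3): given $S\in\CC(F)$ with $S\neq\Delta(F)$, I must produce $x\in\msg(S)$ with $S\backslash\{x\}\in\CC(F)$. First I would set $x=\min(S\backslash\Delta(F))$, which exists since $S\neq\Delta(F)$ and $\Delta(F)\subseteq S$; note $x\leq F-1$ (indeed $x\notin\Delta(F)$ forces $x\leq F$, and $x=F$ is impossible since $F\notin S$). By part (3) of Lemma \ref{lemma15}, $x\in\msg(S)$, and by part (2) of Lemma \ref{lemma15}, $S\backslash\{x\}$ is a numerical semigroup. Since $x<F$ and $F\notin S$, removing $x$ does not change the Frobenius number, so $\F(S\backslash\{x\})=F$. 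It remains to check that $S\backslash\{x\}$ is still a Coe-semigroup, i.e.\ that for every odd $y\in S\backslash\{x\}$ one has $\{y-1,y,y+1\}\subseteq S\backslash\{x\}$. The only way this could fail is if $x\in\{y-1,y,y+1\}$ for some odd $y\in S$, $y\neq x$.

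The main obstacle is precisely this last verification: ruling out that $x$ is forced back into the semigroup by the Coe-condition applied to a nearby odd element. I would argue by cases on the parity of $x$. If $x$ is even, then the only odd $y$ with $x\in\{y-1,y+1\}$ are $y=x-1$ and $y=x+1$; but $x-1<x=\min(S\backslash\Delta(F))$ and $x-1\geq 1$ would force $x-1\in\Delta(F)$, impossible since $x-1\leq F-2<F+1$, so $x-1\notin S$; and if $x+1\in S$ then the Coe-condition on the odd element $x+1$ already requires $x\in S$ — but we are removing $x$, so I must instead observe that in this case $x+1$ is \emph{also} removed or handle it: more carefully, if $x$ is even and $x+1\in S$, note $x+1$ is odd, and since $x+1>x$ is not minimal... this shows the choice $x=\min(S\backslash\Delta(F))$ may need adjustment. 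The cleaner route, which I would adopt, is: if $x$ is even and $x+1\in S$, replace the candidate by reasoning that the \emph{largest} gap-like element works, or directly invoke \cite[Proposition 28]{coe} / the structure of Coe-semigroups to pick $x$ as a special gap whose removal respects the Coe-property; alternatively, if $x$ is odd then by the Coe-condition $x-1,x+1\in S$ with $x-1<x$ contradicting minimality (as $x-1\geq 1$), so $x$ cannot be odd, forcing $x$ even, and then one checks $x-1\notin S$ (minimality) and, if $x+1\in S$, that $S\backslash\{x,x+1\}$ — no. Given the delicacy, I expect the authors choose $x$ more cleverly (e.g.\ the minimal element of $S\backslash\Delta(F)$ together with a parity argument showing $x-1\notin S$ always, and $x+1\in S\Rightarrow$ contradiction with minimality after noting $x$ even and examining $x+1$), and the proof will hinge on showing such an $x$ is never the $-1$, $0$, or $+1$ neighbour of an odd element remaining in $S\backslash\{x\}$; this compatibility check between ``smallest new element'' and ``coated odd'' is the crux.
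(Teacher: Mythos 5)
Your verifications of properties 1) and 2) are correct and coincide with the paper's (which invokes Lemma \ref{lemma32} for the minimum and Lemma \ref{lemma33} together with part 1) of Lemma \ref{lemma15} for intersections). The problem is property 3): you commit to the single candidate $x=\min(S\backslash\Delta(F))$ (which is just $\m(S)$), correctly deduce that this $x$ must be even and that $x-1\notin S$, but then you hit the subcase $x+1\in S$ and do not resolve it --- you explicitly concede that ``the choice may need adjustment'' and that ``the authors choose $x$ more cleverly.'' That unresolved subcase is a genuine gap: when $\m(S)$ is even and $\m(S)+1\in S$, removing $\m(S)$ really does destroy the Coe property, so your candidate fails and no completed argument is given.

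The missing idea is a different case split, on the set $A=\{x\in\msg(S)\mid x<F\}$ (nonempty since $S\neq\Delta(F)$). If $A$ contains an \emph{odd} element $x$, remove that one: an odd $x$ can never equal $y-1$ or $y+1$ for an odd $y$ (those are even), so deleting it cannot violate the coating condition for any remaining odd element, and $\F$ is unchanged since $x<F$; hence $S\backslash\{x\}\in\CC(F)$. Only when every element of $A$ is even do you remove $\m(S)$; and in that case your problematic subcase cannot occur, because if $\m(S)+1$ were in $S$ it would be an odd minimal generator (as $\m(S)+1<2\m(S)$) with $\m(S)+1<F$, contradicting the assumption that all elements of $A$ are even. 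In other words, exactly the configuration that blocks your argument is the configuration in which one should delete the odd generator $\m(S)+1$ instead of $\m(S)$. With that prioritisation the proof closes; without it, it does not.
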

\begin{proof}
	By Lemma \ref{lemma32}, we know that $\Delta(F)$ is the minimum of $\CC(F),$ and if $\{S,T\}\subseteq \CC(F),$ by Lemma \ref{lemma33}, we have $S\cap T\in \CC(F).$ To conclude the proof, we will see that if $S\in \CC(F)$ and $S\neq \Delta(F),$ then there is $x\in \msg(S)$ such that $S\backslash \{x\}\in \CC(F).$ In fact, if $S\neq \Delta(F),$ then $A=\{\msg(S)\mid x<F\}\neq \emptyset.$ We distinguish two
	cases.
	\begin{enumerate}
		\item[1)] If there is $x\in A$ such that $x$ is odd, then it is clear that $S\backslash \{x\}\in \CC(F).$
		\item[2)] If all the elements in $A$ are even, so it is clear that $S\backslash \{\m(S)\}\in \CC(F).$
	\end{enumerate}
\end{proof}
We define the graph $\G(\CC(F))$ as the graph whose vertices are the elements of $\CC(F)$ 
 and $(S, T)\in \CC(F)\times \CC(F)$  is an edge if and only if $T=S\backslash \{\mu(\CC(F),S)\}.$
 
 By applying Propositions \ref{proposition4}, \ref{proposition6} and \ref{proposition34}, we obtain the next result.
 
 \begin{proposition}\label{proposition35} If $F$ is odd, then  $\G(\CC(F))$ is a tree with root $\Delta(F).$ Moreover, if $S\in \CC(F),$ then the set formed by the children of $S$ in the tree  $\G(\CC(F))$ is $\{S\cup \{x\}\mid x\in \SG(S), S\cup \{x\}\in \CC(F) \mbox{ and }\mu(\CC(F),S\cup \{x\})=x\}.$
 \end{proposition}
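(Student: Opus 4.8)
The plan is to derive Proposition~\ref{proposition35} as a direct corollary of the general machinery of Section~2, specialised to the semi-covariety $\CC(F)$, which we already know to be a genuine semi-covariety by Proposition~\ref{proposition34}. Concretely, I would proceed in three short moves. First, since $F$ is odd, Proposition~\ref{proposition34} tells us that $\CC(F)$ is a semi-covariety with minimum $\Delta(F)$; hence Proposition~\ref{proposition4} applies verbatim with $\CF=\CC(F)$ and gives that $\G(\CC(F))$ is a tree with root $\min(\CC(F))=\Delta(F)$. Here one only has to note that the graph $\G(\CC(F))$ defined just before the statement coincides with the graph $\G(\CF)$ of Section~2 when $\CF=\CC(F)$: both have vertex set $\CC(F)$ and both declare $(S,T)$ an edge precisely when $T=S\backslash\{\mu(\CC(F),S)\}$. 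So this identification is immediate from the definitions.

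Second, for the description of the children, I would simply invoke Proposition~\ref{proposition6} with $\CF=\CC(F)$. That proposition states that, for $S\in\CF$, the set of children of $S$ in $\G(\CF)$ is
$$
\{S\cup\{x\}\mid x\in\SG(S),\ S\cup\{x\}\in\CF\ \text{and}\ \mu(\CF,S\cup\{x\})=x\}.
$$
Substituting $\CF=\CC(F)$ yields exactly the claimed set. Nothing further is needed, because Proposition~\ref{proposition6} was proved for an arbitrary semi-covariety, and $\CC(F)$ is one.

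Finally, I would remark that the three hypotheses required to apply Propositions~\ref{proposition4} and~\ref{proposition6} — that $\CC(F)$ is non-empty, closed under finite intersections, and satisfies the removal axiom — are precisely the content of Lemma~\ref{lemma32}(2), Lemma~\ref{lemma33}, and the two-case argument inside the proof of Proposition~\ref{proposition34}, all of which are available once $F$ is odd. There is essentially no obstacle here: the only thing to be careful about is the bookkeeping identification of the two graph definitions (the one in Section~2 via $\mu(\CF,\cdot)$ and the one stated locally for $\CC(F)$), but since both use the same edge rule $T=S\backslash\{\mu(\CC(F),S)\}$ this is a triviality. Thus the proof is a one-line appeal: ``By Proposition~\ref{proposition34}, $\CC(F)$ is a semi-covariety with minimum $\Delta(F)$; now apply Propositions~\ref{proposition4} and~\ref{proposition6}.''
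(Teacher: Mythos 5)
Your proposal is correct and is exactly the paper's argument: the paper states this result as an immediate consequence of Propositions~\ref{proposition4}, \ref{proposition6} and \ref{proposition34}, which is precisely your one-line appeal. The only addition you make — checking that the locally defined graph $\G(\CC(F))$ coincides with the general $\G(\CF)$ for $\CF=\CC(F)$ — is a harmless and correct bookkeeping remark.
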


In \cite[Proposition 2]{coe} appears the following result. It allows us to check wether or not a numerical semigroup is a Coe-semigroup.

\begin{proposition}\label{proposition36}
	Let $S$ be a numerical semigroup. The following conditions 
	are equivalent. 
	\begin{enumerate}
		\item[1)] $S$ is a Coe-semigroup.
		\item[2)] If $x\in \msg(S)$ and $x$ is odd, then $\{x-1,x,x+1\}\subseteq S.$ 
	\end{enumerate}
\end{proposition}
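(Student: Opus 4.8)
The plan is to prove the equivalence of the two conditions in Proposition~\ref{proposition36} by a direct argument in both directions, the nontrivial implication being $2)\Rightarrow 1)$.

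\medskip

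\noindent\textbf{Approach.} The implication $1)\Rightarrow 2)$ is immediate from the definition of Coe-semigroup: if $S$ is a Coe-semigroup and $x\in\msg(S)$ is odd, then in particular $x\in S$ is odd, so $\{x-1,x,x+1\}\subseteq S$ by definition. For $2)\Rightarrow 1)$ the goal is to show that $\{y-1,y,y+1\}\subseteq S$ for \emph{every} odd $y\in S$, not merely for the odd minimal generators. First I would take an arbitrary odd $y\in S$ and write $y=\lambda_1 a_1+\dots+\lambda_n a_n$ with $a_i\in\msg(S)$ and $\lambda_i\in\N\setminus\{0\}$. Since $y$ is odd, at least one term in this sum is odd; grouping, this means at least one $a_i$ is odd and appears with an odd coefficient $\lambda_i$. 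Fix such an index, say $a_1$ odd with $\lambda_1$ odd.

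\medskip

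\noindent\textbf{Key steps.} The idea is to peel off one copy of the odd generator $a_1$ and replace it by $a_1-1$ and $a_1+1$, which both lie in $S$ by hypothesis~$2)$. Concretely, write
$$
y-1=(a_1-1)+(\lambda_1-1)a_1+\lambda_2 a_2+\dots+\lambda_n a_n,
$$
$$
y+1=(a_1+1)+(\lambda_1-1)a_1+\lambda_2 a_2+\dots+\lambda_n a_n.
$$
Each summand on the right-hand sides belongs to $S$: the generators $a_2,\dots,a_n$ and $a_1$ do by definition, $a_1-1$ and $a_1+1$ do by hypothesis~$2)$ applied to the odd minimal generator $a_1$, and $\lambda_1-1\ge 0$ so the term $(\lambda_1-1)a_1$ is a legitimate (possibly empty) sum of elements of $S$. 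Since $S$ is closed under addition and contains $0$, both $y-1$ and $y+1$ lie in $S$, and of course $y\in S$. Hence $\{y-1,y,y+1\}\subseteq S$ for every odd $y\in S$, which is exactly the definition of a Coe-semigroup.

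\medskip

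\noindent\textbf{Main obstacle.} The only subtle point — and the step I would be most careful about — is guaranteeing the existence of an odd generator $a_i$ occurring with an odd multiplicity in some representation of $y$. This follows from a parity count: reducing the expression $\sum \lambda_i a_i$ modulo $2$, the sum $\sum_{a_i\text{ odd}}\lambda_i$ must be odd since $y$ is odd, so at least one odd $a_i$ has odd $\lambda_i$. Once this is in place, the substitution argument is routine. (One should also note the degenerate case where $y$ itself is a minimal generator, which is covered directly by hypothesis~$2)$, and the case $S=\N$, where every odd generator is $1$ and the claim is trivial.) I would remark that the content of the proposition is precisely that the Coe-condition, a priori a condition on all odd elements, need only be checked on the finitely many odd minimal generators, which is what makes it algorithmically useful.
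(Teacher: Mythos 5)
Your argument is correct. Note that the paper itself gives no proof of this proposition --- it is imported verbatim as Proposition~2 of the cited reference \cite{coe} --- so there is nothing to compare against; your proof is a complete, self-contained derivation. The parity count guaranteeing an odd minimal generator $a_1$ with $\lambda_1$ odd is sound (in fact you only need $\lambda_1\ge 1$, which already follows, since all you use later is $\lambda_1-1\ge 0$), and the substitution $y\mp 1=(a_1\mp 1)+(\lambda_1-1)a_1+\lambda_2a_2+\dots+\lambda_na_n$ correctly exhibits $y-1$ and $y+1$ as sums of elements of $S$.
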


With this last proposition we already have all the ingredients to
show an algorithm to compute the semi-covariety $\CC(F).$

\begin{algorithm}\label{algorithm37}\mbox{}\par
	\caption{Computation of $\CC(F)$}
	\noindent\textsc{Input}:  A positive odd integer $F$.   \par
	\noindent\textsc{Output}:  $\CC(F).$
	\begin{algorithmic}
		\item[(1)] $\CC(F)=\{\Delta(F)\},$ $B=\{\Delta(F)\}$ and $\Ap(\Delta(F), F+1)=\{0,F+2,\cdots, 2F+1\}.$ 
		\item[(2)] For every $S \in B,$ compute $$H(S)=\{x\in \SG(S)\mid S\cup \{x\} \in \CC(F) \mbox{ and } \{\mu(\CC(F),S\cup \{x\})=x\}.$$
		\item[(3)] If $\displaystyle\bigcup_{S\in B}H(S)=\emptyset,$ then return $\CC(F).$
		\item[(4)]  $C=\displaystyle\bigcup_{S\in B}\{S\cup \{x\}\mid x\in H(S)\}.$ 	
		\item[(5)]  $\CC(F):=\CC(F)\cup C$ and $B:=C.$
		\item[(6)] Compute $\Ap(S, F+1)$ for all $S\in B$ and   go to Step $(2).$ 	
	\end{algorithmic}
\end{algorithm}

\begin{remark}\label{nota38}
	\begin{enumerate}
		\item Observe that as a consequence of Proposition \ref{proposition36}, we have that if $S$ is a numerical semigroup with $\F(S)$ odd, then $S$ is a Coe-semigroup if and only if  $\{x-1,x,x+1\}\subseteq S$ for all $x\in \msg(S)$ such that $x$ is odd and $x<\F(S).$
		\item It is clear that if $S$ is a numerical semigroup with Frobenius number $F$, then $\{a\in \msg(S)\mid a<F\}=\{w\in \Ap(S,F+1)\mid w<F \mbox{ and }w-w'\notin \Ap(S,F+1) \mbox{ for all }w'\in \Ap(S,F+1)\backslash \{0,w\}\}.$
	\end{enumerate}
	
\end{remark}

\begin{example}\label{example39}
	We are going to compute $\CC(7)$ by using Algorithm 3.
	\begin{itemize}
		\item $\CC(7))=\{\Delta(7)\},$ $B=\{\Delta(7)\}$ and $\Ap(\Delta(7), 8)=\{0,9,10,11,12,13,14,15\}.$ 
		\item $H(\Delta(7))=\{4,6\}.$
		\item $C=\{\Delta(7)\cup \{4\}, \Delta(7)\cup \{6\}\}.$
			\item $\CC(7)=\{\Delta(7), \Delta(7)\cup \{4\}, \Delta(7)\cup \{6\}\}$ and $B=\{\Delta(7)\cup \{4\}, \Delta(7)\cup \{6\}\}.$ 
		\item $\Ap(\Delta(7)\cup \{4\}, 8)=\{0,4,9,10,11,13,14,15\}$ and  $\Ap(\Delta(7)\cup \{6\}, 8)=\{0,6,9,10,11,12,13,15\}.$
			\item $H(\Delta(7)\cup \{4\})=\emptyset $ and 	 $H(\Delta(7)\cup \{6\})=\{4\}.$
			\item $C=\{\Delta(7)\cup \{4,6\}\}.$
				\item $\CC(7)=\{\Delta(7), \Delta(7)\cup \{4\}, \Delta(7)\cup \{6\}, \Delta(7)\cup \{4,6\}\}$ and  $B=\{\Delta(7)\cup \{4,6\}\}.$ 
			\item $\Ap(\Delta(7)\cup \{4,6\}, 8)=\{0,4,6,9,10,11,13,15\}.$
				\item $H(\Delta(7)\cup \{4,6\})=\{5,2\}.$
					\item $C=\{\Delta(7)\cup \{2,4,6\}, \Delta(7)\cup \{4,5,6\}\}.$
				\item $\CC(7)=\{\Delta(7), \Delta(7)\cup \{4\}, \Delta(7)\cup \{6\}, \Delta(7)\cup \{4,6\}, \Delta(7)\cup \{2,4,6\}, \Delta(7)\cup \{4,5,6\} \}$ and  $B=\{\Delta(7)\cup \{2,4,6\}, \Delta(7)\cup \{4,5,6\}\}.$ 
				\item $\Ap(\Delta(7)\cup \{2,4,6\}, 8)=\{0,2,4,6,9,11,13,15\}$ and $\Ap(\Delta(7)\cup \{4,5,6\}, 8)=\{0,4,5,6,9,10,11,15\}.$ 
					\item $H(\Delta(7)\cup \{2,4,6\})=\emptyset$ and $H(\Delta(7)\cup \{4,5,6\})=\emptyset.$
					\item Algorithm 3 returns  $\CC(7)=\{\Delta(7), \Delta(7)\cup \{4\}, \Delta(7)\cup \{6\}, \Delta(7)\cup \{4,6\}, \Delta(7)\cup \{2,4,6\}, \Delta(7)\cup \{4,5,6\} \}.$
	\end{itemize}
\end{example}
If $x$ is an  positive odd integer,  denote by $\c(x)=\{x-1,x+1\}.$ If $X\subseteq \N\backslash \{0\}, $ then $\C(X)=X\cup \left ( \displaystyle \bigcup_{ \begin{array}{c}
		x\in X\\
		x \mbox{ impar}\\
\end{array}}
\c(x)
\right).$
\begin{proposition}\label{proposition40}
	If $F$ is  odd and $X\subseteq \{1,\cdots, F-1\},$ then $X$ is a $\CC(F)$-set if and only if $F\notin \langle \C(X)\rangle.$ 
\end{proposition}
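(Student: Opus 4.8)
The plan is to prove the two implications separately; in the non-trivial direction the work is carried by the concrete semigroup $S=\langle\C(X)\rangle\cup\{F+1,\rightarrow\}$ (which will in fact turn out to be $\CC(F)[X]$). For the forward implication, suppose $X$ is a $\CC(F)$-set, so $X\subseteq T$ for some $T\in\CC(F)$. If $x\in X$ is odd then $x\in T$, and since $T$ is a Coe-semigroup, $\{x-1,x,x+1\}\subseteq T$, i.e.\ $\c(x)\subseteq T$; hence $\C(X)\subseteq T$, and as $T$ is closed under addition, $\langle\C(X)\rangle\subseteq T$. Since $\F(T)=F$ we have $F\notin T$, so $F\notin\langle\C(X)\rangle$.

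For the converse, assume $F\notin\langle\C(X)\rangle$ and set $S=\langle\C(X)\rangle\cup\{F+1,\rightarrow\}$. I would verify in turn that $S$ is a numerical semigroup, that $\F(S)=F$, that $S$ is a Coe-semigroup, and that this makes $X$ a $\CC(F)$-set. The first two points are routine: $0\in S$; a sum of two nonzero elements of $S$ either lies in $\langle\C(X)\rangle$ or is at least $F+2$, so $S$ is closed under addition; $\N\setminus S$ is finite because $\{F+1,\rightarrow\}\subseteq S$; and every integer greater than $F$ lies in $S$ while $F\notin\langle\C(X)\rangle$ by hypothesis, so $\F(S)=F$. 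Granting that $S$ is a Coe-semigroup, the conclusion is immediate: $X\subseteq\C(X)\subseteq S\in\CC(F)$ and $X\cap\Delta(F)=\emptyset$ since $X\subseteq\{1,\dots,F-1\}$, so $X$ is a $\CC(F)$-set.

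To see that $S$ is a Coe-semigroup, by Proposition~\ref{proposition36} it suffices to show $\{y-1,y,y+1\}\subseteq S$ for every odd $y\in\msg(S)$. If $y\geq F+1$, then, $F$ and $y$ being odd, in fact $y\geq F+2$, so $y-1,y,y+1$ all lie in $\{F+1,\rightarrow\}\subseteq S$. If $y\leq F$, then $y\neq F$ (as $F\notin S$), hence $y\leq F-1$ and so $y\in\langle\C(X)\rangle$; moreover $y\in\msg(S)$ forces $y\in\msg(\langle\C(X)\rangle)$, since any factorization $y=a+b$ with $a,b\in\langle\C(X)\rangle\setminus\{0\}$ would be a factorization inside $S$. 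As $\C(X)$ generates $\langle\C(X)\rangle$ we get $y\in\C(X)$, and because the only odd elements of $\C(X)$ are the odd elements of $X$ (each $\c(x)$ contributes only even numbers), $y\in X$. Then $\c(y)=\{y-1,y+1\}\subseteq\C(X)\subseteq S$ and $y\in S$, so $\{y-1,y,y+1\}\subseteq S$. (Here $y=1$ cannot occur, since $1\in X$ would force $F\in\langle\C(X)\rangle$.)

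The step with real content is that $S$ is a Coe-semigroup, and inside it the key observation that an odd minimal generator of $S$ below $F+1$ must already belong to $X$: this rests on the standard fact $\msg(\langle A\rangle)\subseteq A$ together with the parity bookkeeping that the operator $\C$ adjoins only even numbers and hence introduces no new odd generators. Everything else is routine.
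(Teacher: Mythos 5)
Your proof is correct and follows essentially the same route as the paper: the forward direction by observing that any Coe-semigroup containing $X$ must contain $\C(X)$ and hence $\langle\C(X)\rangle$, and the converse by exhibiting $S=\langle\C(X)\rangle\cup\{F+1,\rightarrow\}$ and checking via Proposition~\ref{proposition36} that it lies in $\CC(F)$. The paper compresses that last verification into ``we easily deduce''; your expansion of it (odd minimal generators of $S$ below $F$ must already lie in $X$, since $\C$ adjoins only even numbers) is exactly the right justification.
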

\begin{proof}
{\it Necessity.} If $X$ is a $\CC(F)$-set, then $\CC(F)[X]\in \CC(F).$ Therefore, $\C(X)\subseteq \CC(F)[X]$ and so  $\langle \C(X)\rangle \subseteq  \CC(F)[X].$ Consequently, $F\notin \langle \C(X)\rangle.$

{\it Sufficiency.}	By applying Proposition \ref{proposition36}, we easily deduce that $S=\langle \C(X)\rangle \cup \{F+1,\rightarrow\}\in \CC(F).$ As $X\cap \Delta(F)=\emptyset$ and $X\subseteq S,$ then $X$ is a $\CC(F)$-set.
\end{proof}

\begin{proposition}\label{proposition41}
	Let $F$ be a positive odd  integer and $X$ be a $\CC(F)$-set, then $\CC(F)[X]=\langle \C(X)\rangle \cup \{F+1,\rightarrow\}.$
\end{proposition}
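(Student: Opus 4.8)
The plan is to establish the equality $\CC(F)[X]=\langle \C(X)\rangle \cup \{F+1,\rightarrow\}$ by a double inclusion, mirroring the structure of the proof of Proposition~\ref{proposition26}, and leaning on Proposition~\ref{proposition8} (which characterizes $\CC(F)[X]$ as the least element of $\CC(F)$ containing $X$) together with the membership criterion in Proposition~\ref{proposition36}. Write $S=\langle \C(X)\rangle \cup \{F+1,\rightarrow\}$.

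First I would check that $S\in \CC(F)$. Since $X$ is a $\CC(F)$-set, we have $X\subseteq T$ for some $T\in\CC(F)$; then $\C(X)\subseteq T$ because $T$ is a Coe-semigroup, so $\F(\langle\C(X)\rangle)\le \F(T)=F$ and hence $\langle \C(X)\rangle\cup\{F+1,\rightarrow\}$ is a numerical semigroup with Frobenius number $F$ (one must also note $F\notin\langle\C(X)\rangle$, which follows since $F\notin T\supseteq\langle\C(X)\rangle$; equivalently invoke Proposition~\ref{proposition40}). That $S$ is a Coe-semigroup follows from Proposition~\ref{proposition36}: any odd element of $\msg(S)$ must lie in $\C(X)\setminus\{F+1,\rightarrow\}$, and by construction of $\C$ such an odd $x$ comes with $x-1,x+1\in\C(X)\subseteq S$, so $\{x-1,x,x+1\}\subseteq S$. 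Thus $S\in\CC(F)$, and since $X\subseteq S$, Proposition~\ref{proposition8} gives $\CC(F)[X]\subseteq S$.

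For the reverse inclusion, let $T\in\CC(F)$ with $X\subseteq T$. Because $T$ is a Coe-semigroup, every odd $x\in X$ forces $x-1,x+1\in T$, so $\C(X)\subseteq T$, hence $\langle\C(X)\rangle\subseteq T$; and since $\F(T)=F$ we have $\{F+1,\rightarrow\}\subseteq T$. Therefore $S=\langle\C(X)\rangle\cup\{F+1,\rightarrow\}\subseteq T$. As this holds for every such $T$, applying Proposition~\ref{proposition8} again yields $S\subseteq\CC(F)[X]$, and the two inclusions combine to give $\CC(F)[X]=S$.

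I expect the only genuine subtlety to be the verification that $S$ is itself an element of $\CC(F)$ — specifically that $S$ is a Coe-semigroup — since the rest is a routine ``smallest containing object'' argument. The key point there is that passing from $X$ to $\C(X)$ adjoins exactly the even neighbours $x-1,x+1$ of each odd $x\in X$, and the generators of $\langle\C(X)\rangle$ that are odd form a subset of the odd elements of $\C(X)=X$ (the adjoined neighbours being even), so Proposition~\ref{proposition36} applies cleanly. One should also make sure $F$ itself is not accidentally put into $S$, i.e.\ $F\notin\langle\C(X)\rangle$, which is precisely the content guaranteed by $X$ being a $\CC(F)$-set via Proposition~\ref{proposition40} (or directly from $\langle\C(X)\rangle\subseteq T$ and $F\notin T$).
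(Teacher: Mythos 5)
Your proposal is correct and follows essentially the same route as the paper: verify that $S=\langle \C(X)\rangle \cup \{F+1,\rightarrow\}$ belongs to $\CC(F)$ via Propositions~\ref{proposition36} and~\ref{proposition40}, observe that any element of $\CC(F)$ containing $X$ must contain $\C(X)$ (hence $S$), and conclude with Proposition~\ref{proposition8}. You merely supply more detail than the paper does on why $S$ is a Coe-semigroup (and note, for completeness, that odd minimal generators of $S$ exceeding $F$ also trivially satisfy the Coe condition since their neighbours lie in $\{F+1,\rightarrow\}$).
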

\begin{proof}
	Using Propositions \ref{proposition36} and \ref{proposition40}, it follows easily that $S=\langle \C(X)\rangle \cup \{F+1,\rightarrow\}\in \CC(F).$ It is clear that every element of $\CC(F)$ that contains $X$ must contain $\C(X).$ Consequently, $S$ is the smallest element of $\CC(F)$ containing $X.$ By Proposition \ref{proposition8}, we have $\CC(F)[X]=S.$
\end{proof}
\begin{proposition}\label{proposition42}
	If $F$ is  odd and $S\in \CC(F),$ then $A=\{x\in \msg(S)\mid x<F \mbox{ and } x \mbox{ is  odd } \}\cup \{x\in \msg(S)\mid x<F,  x \mbox{ is even and } \{x-1,x+1 \}\cap \msg(S)=\emptyset\}$ is the unique $\CC(F)$-minimal system of generators of $S.$
\end{proposition}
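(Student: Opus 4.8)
\emph{Overall strategy.} The statement has two halves: that $A$ is a $\CC(F)$-system of generators of $S$, and that every $\CC(F)$-system of generators of $S$ contains $A$. The first half I would obtain from Proposition~\ref{proposition41}; the second half, once proved, gives minimality and uniqueness at once. For the first half, observe that $A\subseteq\msg(S)$ and every element of $A$ lies in $\{1,\dots,F-1\}$, so $A$ is a $\CC(F)$-set (disjoint from $\Delta(F)=\{0,F+1,\rightarrow\}$ and contained in $S\in\CC(F)$), whence $\CC(F)[A]=\langle\C(A)\rangle\cup\{F+1,\rightarrow\}$ by Proposition~\ref{proposition41}. It then remains to check this equals $S$. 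The inclusion $\subseteq$ is routine: if $x\in A$ is odd then $\{x-1,x,x+1\}\subseteq S$ because $S$ is a Coe-semigroup, so $\C(A)\subseteq S$ and $\langle\C(A)\rangle\subseteq S$, while $\{F+1,\rightarrow\}\subseteq S$ since $\F(S)=F$. For $\supseteq$, since $\langle\C(A)\rangle\cup\{F+1,\rightarrow\}$ is a submonoid it suffices to show $\msg(S)\subseteq\langle\C(A)\rangle\cup\{F+1,\rightarrow\}$: a generator $g>F$ lies in $\{F+1,\rightarrow\}$; an odd $g<F$, or an even $g<F$ with $\{g-1,g+1\}\cap\msg(S)=\emptyset$, lies in $A$; and an even $g<F$ with, say, $g-1\in\msg(S)$ (the case $g+1\in\msg(S)$ being symmetric) has $g-1$ an odd minimal generator $<F$, so $g-1\in A$ and $g\in\c(g-1)\subseteq\C(A)$.

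\emph{Containment in every $\CC(F)$-system of generators.} Let $B$ be one such. Imitating the argument of Proposition~\ref{proposition24}, I would argue by contradiction: if some $a\in A$ were not in $B$, then $B\subseteq S\setminus\{a\}$, and \emph{provided} $S\setminus\{a\}\in\CC(F)$, Proposition~\ref{proposition8} would force $S=\CC(F)[B]\subseteq S\setminus\{a\}$, which is absurd. So the crux is the auxiliary claim: $S\setminus\{a\}\in\CC(F)$ for every $a\in A$. Since $a\in\msg(S)$, Lemma~\ref{lemma15} gives that $S\setminus\{a\}$ is a numerical semigroup, and since $a<F$ its Frobenius number is still $F$; so I only need it to be a Coe-semigroup, i.e.\ that $a\notin\{y-1,y,y+1\}$ for every odd $y\in S$ with $y\ne a$. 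If $a$ is odd, this holds by parity. If $a$ is even, the only obstruction would be $a=y\pm 1$ for some odd $y\in S$, i.e.\ $a-1\in S$ or $a+1\in S$.

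\emph{The main obstacle.} Ruling out $a\pm1\in S$ for an even $a\in A$ is, I expect, the real content of the proof, and it is the only place where the precise definition of $A$ (the condition $\{a-1,a+1\}\cap\msg(S)=\emptyset$) is used. Suppose $a+1\in S$. As $a$ is even, $a+1$ is odd, and by definition of $A$ we have $a+1\notin\msg(S)$; a nonzero non-generator of a numerical semigroup is a sum of two nonzero elements, so write $a+1=s_1+s_2$ with $s_1,s_2\in S\setminus\{0\}$, exactly one of which, say $s_2$, is odd. The Coe-property gives $s_2-1\in S$, and $s_2\ne1$ (else $S=\N$, contradicting $\F(S)=F$), so $s_2-1\in S\setminus\{0\}$; then $a=s_1+(s_2-1)$ exhibits $a$ as a sum of two nonzero elements of $S$ — impossible, since $a\in\msg(S)$. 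The case $a-1\in S$ is symmetric, using $s_2+1\in S$ from the Coe-property. This establishes $S\setminus\{a\}\in\CC(F)$ and hence, via the contradiction above, $A\subseteq B$.

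\emph{Conclusion.} Combining the two halves: $A$ is a $\CC(F)$-system of generators of $S$, and $A\subseteq B$ for every $\CC(F)$-system of generators $B$ of $S$. Hence no proper subset of $A$ can be a $\CC(F)$-system of generators of $S$ (such a subset would have to contain $A$), so $A$ is $\CC(F)$-minimal; and if $B$ is any $\CC(F)$-minimal system of generators of $S$, then $A\subseteq B$ together with the fact that $A$ already generates and the minimality of $B$ forces $B=A$. Thus $A$ is the unique $\CC(F)$-minimal system of generators of $S$, the degenerate case $S=\Delta(F)$ (where $A=\emptyset$) being covered by the same statements.
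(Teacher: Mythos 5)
Your proposal is correct and follows essentially the same route as the paper: establish $\CC(F)[A]=S$ via Proposition~\ref{proposition41}, then show $A$ is contained in every $\CC(F)$-system of generators by the $S\setminus\{a\}$ contradiction argument. The only difference is that the paper dismisses the key step $S\setminus\{a\}\in\CC(F)$ with ``we easily obtain,'' whereas you supply the actual verification (ruling out $a\pm1\in S$ for even $a\in A$ by decomposing the odd non-generator $a\pm1$), which is a worthwhile addition rather than a deviation.
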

\begin{proof}
	By Proposition \ref{proposition41}, we know that $\CC(F)[A]=\langle \C(A) \rangle \cup \{F+1,\rightarrow\}.$ It is clear that $\{x\in \msg(S)\mid x<F\}\subseteq \C(A)$ and so, $S\subseteq \CC(F)[A].$ As $S\in \CC(F)$ and $A \subseteq S,$ then by Proposition \ref{proposition8}, we have that $\CC(F)[A]\subseteq S.$ Hence, $\CC(F)[A]=S.$
	
	In order to conclude the proof, it suffices to prove that if $B$ is a $\CC(F)$-set and $\CC(F)[B]=S,$ then $A\subseteq B.$ In fact, if $A \not\subseteq B, $ then there is $a\in A\backslash B.$ Therefore, we easily obtain that $S\backslash\{a\}\in \CC(F)$ and $B\subseteq S\backslash \{a\}.$ By applying Proposition \ref{proposition8}, we have that $S=\CC(F)[B]\subseteq S\backslash \{a\},$ which is absurd.
	\end{proof}
	\begin{example}\label{example43}
		By applying Proposition \ref{proposition36}, we have that $S=\langle 4,5,6\rangle\in \CC(7)$ and by applying Proposition \ref{proposition42}, we have that $\{5\}$ is the $\CC(7)$-minimal system of generators of $S.$		
	\end{example}

\begin{remark}
	\label{remark43a}
We now focus on the study of elements from $\CC(F)$ with $\CC(F)$-$\rank$ $1.$ Notice that if $x$ is a positive even  integer and $x<F,$ then by Proposition \ref{proposition40}, we know that $\{x\}$ is a $\CC(F)$-set, and by Proposition \ref{proposition41} we obtain that $\CC(F)[\{x\}]=\langle x \rangle \cup \{F+1,\rightarrow\}.$

Notice also that if $x$ is a  positive odd positive integer less than $F,$ then by Proposition \ref{proposition40}, $\{x\}$ is a $\CC(F)$-set if and only if $F\notin \langle x-1,x,x+1\rangle$ and by Proposition \ref{proposition41} we have $\CC(F)[\{x\}]=\langle x-1,x,x+1 \rangle \cup \{F+1,\rightarrow\}.$

\end{remark}

	If $q$ is a rational number,  $\lfloor q \rfloor=\max \{z\in \Z\mid z\leq q\}.$ 
	
	The following results is deduced from \cite[Corollary 2]{pacific}.

\begin{lemma}\label{lemma44}
	Let $x$ be an odd integer such that $3\leq x\leq F-2.$ Then $F\notin \langle x-1,x,x+1\rangle$ if and only if $2\left\lfloor \frac{F}{x-1}\right\rfloor < F\, \mod\, (x-1).$
	
\end{lemma}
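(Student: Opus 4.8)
The plan is to trade the semigroup $\langle x-1,x,x+1\rangle$ for a semigroup generated by three consecutive integers and then describe its elements explicitly. Write $d=x-1$; since $x$ is odd with $x\ge 3$, $d$ is an even integer with $d\ge 2$ (so $\gcd(d,d+1)=1$ and everything in sight is a numerical semigroup), and $\langle x-1,x,x+1\rangle=\langle d,d+1,d+2\rangle$. The first step is to recall the elementary description of the elements of such a semigroup, which is essentially the content of \cite[Corollary 2]{pacific} (and can also be checked by hand): a sum of exactly $k$ generators taken from $\{d,d+1,d+2\}$ is precisely an integer of the form $kd+t$ with $0\le t\le 2k$, and every such integer does arise. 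Hence
$$
n\in\langle d,d+1,d+2\rangle \iff \exists\,k\in\N \text{ such that } kd\le n\le k(d+2).
$$

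Next I would pin down, for $n=F$, the unique value of $k$ that needs to be tested. The condition $kd\le F$ forces $k\le k_0:=\left\lfloor\frac{F}{d}\right\rfloor$, while the condition $F\le k(d+2)$ is equivalent to $F-k(d+2)\le 0$, and $k\mapsto F-k(d+2)$ is strictly decreasing; thus the set of $k$ meeting the second condition is upward closed, and its intersection with $\{k\in\N : k\le k_0\}$ is non-empty if and only if $k_0$ itself satisfies $F\le k_0(d+2)$. (Here $k_0\ge 1$, since $F\ge x+2>d$ by hypothesis.) Therefore
$$
F\in\langle d,d+1,d+2\rangle \iff F\le k_0(d+2).
$$

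All that then remains is bookkeeping with floors and remainders. The inequality $F\le k_0(d+2)$ rewrites as $F-dk_0\le 2k_0$; since $0\le F-dk_0=F\bmod d<d$ and $k_0=\left\lfloor\frac{F}{d}\right\rfloor$, this says exactly $F\bmod d\le 2\left\lfloor\frac{F}{d}\right\rfloor$. Negating and substituting $d=x-1$ yields that $F\notin\langle x-1,x,x+1\rangle$ if and only if $2\left\lfloor\frac{F}{x-1}\right\rfloor<F\bmod(x-1)$, which is the claim. The hypothesis $3\le x\le F-2$ is used only to guarantee $d=x-1\ge 2$ and $F>d+2$, ruling out degenerate cases.

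I expect the only delicate point to be the explicit description of $\langle d,d+1,d+2\rangle$ together with the monotonicity remark that, for $n=F$, it suffices to test the single value $k=k_0$; once those are in place the statement drops out of a short computation, so the bulk of the work is really the reduction and the bookkeeping rather than any genuine difficulty.
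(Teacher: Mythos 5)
Your proof is correct and follows essentially the same route as the paper, which simply deduces the lemma from \cite[Corollary 2]{pacific} (the membership criterion $n\in\langle a,\dots,a+\ell\rangle \iff n \bmod a \le \ell\lfloor n/a\rfloor$ for interval-generated semigroups). You have merely made that citation self-contained by proving the interval criterion for $\langle d,d+1,d+2\rangle$ directly and then carrying out the same floor/remainder bookkeeping; all steps, including the reduction to the single test value $k_0=\lfloor F/(x-1)\rfloor$, check out.
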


The next result is a consequence of Lemma \ref{lemma44} and Remark \ref{remark43a}.
\begin{proposition}\label{proposition45}
	With the above notation, we have that $S$ is an element of $\CC(F)$ with $\CC(F)$-$\rank$ 1 if and only if  one of the following conditions hods.
	\begin{enumerate}
		\item[1)] $S=\langle x \rangle \cup \{F+1,\rightarrow\}$ for some even integer $x$ such that $2\leq x \leq F-1.$
		\item[2)] $S=\langle x-1,x,x+1\rangle \cup \{F+1,\rightarrow\}$ for some odd integer $x$ such that $3\leq x \leq F-2$ and $2\left\lfloor \frac{F}{x-1}\right\rfloor < F\, \mod\, (x-1).$
	\end{enumerate}
	
\end{proposition}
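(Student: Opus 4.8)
The plan is to reduce the statement entirely to the description of rank-$1$ elements recorded in Remark \ref{remark43a} and then apply the arithmetic criterion of Lemma \ref{lemma44}. By Proposition \ref{proposition14}, an element $S\in\CC(F)$ has $\CC(F)\rank(S)=1$ precisely when $S\neq\Delta(F)$ and $S=\CC(F)[\{\mu(\CC(F),S)\}]$; equivalently, combining Proposition \ref{proposition28}-style reasoning with Propositions \ref{proposition40} and \ref{proposition41}, the elements of $\CC(F)$ with $\CC(F)\rank$ equal to $1$ are exactly the sets $\CC(F)[\{x\}]$ where $x$ ranges over those integers in $\{1,\dots,F-1\}$ for which $\{x\}$ is a $\CC(F)$-set. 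So the first step is to make this rephrasing explicit: $\{S\in\CC(F)\mid \CC(F)\rank(S)=1\}=\{\CC(F)[\{x\}]\mid 1\le x\le F-1,\ F\notin\langle\C(\{x\})\rangle\}$, using Proposition \ref{proposition40} for the membership condition.

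Next I would split into the parity of $x$, exactly as in Remark \ref{remark43a}. If $x$ is even and $2\le x\le F-1$, then $\c(x)$ is not defined, so $\C(\{x\})=\{x\}$, and Proposition \ref{proposition40} tells us $\{x\}$ is automatically a $\CC(F)$-set (one checks $F\notin\langle x\rangle$ since $0<F<2x$ would fail only for $x\le F/2$, but in fact $F\notin\langle x\rangle$ holds because $F$ is odd and $x$ is even, so $x\nmid F$). By Proposition \ref{proposition41}, $\CC(F)[\{x\}]=\langle x\rangle\cup\{F+1,\rightarrow\}$, which is case (1). Conversely every such semigroup arises this way. If $x$ is odd, then $x\ge 3$ forces $\C(\{x\})=\{x-1,x,x+1\}$; also $x\le F-1$ together with $x$ odd and $F$ odd forces $x\le F-2$. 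Proposition \ref{proposition40} says $\{x\}$ is a $\CC(F)$-set iff $F\notin\langle x-1,x,x+1\rangle$, and Proposition \ref{proposition41} gives $\CC(F)[\{x\}]=\langle x-1,x,x+1\rangle\cup\{F+1,\rightarrow\}$. Finally, Lemma \ref{lemma44} translates $F\notin\langle x-1,x,x+1\rangle$ into the inequality $2\lfloor F/(x-1)\rfloor<F\bmod(x-1)$, yielding case (2). The case $x=1$ (where $\langle\C(\{1\})\rangle=\langle 1,2\rangle=\N$ and $F\in\N$) produces no rank-$1$ element, consistent with the bounds $x\ge 2$ in (1) and $x\ge 3$ in (2).

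The main subtlety — rather than obstacle — is making sure that distinct choices of $x$ give distinct semigroups, so that "rank exactly $1$" is genuinely characterized and there is no accidental coincidence between a case-(1) semigroup and a case-(2) semigroup, or within a case. For even $x$, the multiplicity of $\langle x\rangle\cup\{F+1,\rightarrow\}$ is $x$ (as long as $x<F+1$), so different even $x$ give different semigroups; for odd $x$, the multiplicity of $\langle x-1,x,x+1\rangle\cup\{F+1,\rightarrow\}$ is $x-1$, which is even, and one checks it cannot coincide with a case-(1) semigroup because $\langle x-1,x,x+1\rangle$ contains $x$ whereas $\langle x-1\rangle\cup\{F+1,\rightarrow\}$ does not contain $x$ when $x<F+1$ and $x\ne x-1$. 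These verifications are short and amount to comparing Apéry sets or small generators; I would state them briefly and invoke Proposition \ref{proposition41} to pin down each semigroup by its $\CC(F)$-minimal system of generators via Proposition \ref{proposition42}. No step here requires a genuinely hard argument: everything is assembled from Propositions \ref{proposition40}, \ref{proposition41}, Remark \ref{remark43a}, and Lemma \ref{lemma44}.
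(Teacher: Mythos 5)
Your proposal is correct and follows essentially the same route as the paper, which derives the proposition directly from Remark \ref{remark43a} (the parity split and the identification of rank-$1$ elements with $\CC(F)[\{x\}]$ for singleton $\CC(F)$-sets) together with Lemma \ref{lemma44}; your write-up simply makes those steps explicit. The final paragraph on distinctness of the semigroups for different $x$ is a harmless extra, not needed for the stated equivalence.
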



\begin{thebibliography}{12}
	
	


\bibitem{apery}
\textsc{R. Ap\'{e}ry},
\newblock \emph{Sur les branches superlin\'{e}aires des courbes alg\'{e}briques,}
\newblock   C.R. Acad. Sci. Paris 222 (1946), 1198--2000.

\bibitem{barucci}
\textsc{V. Barucci, D. E. Dobbs and M. Fontana},
\newblock \emph{Maximality Properties in Numerical Semigroups and Applications to One-Dimensional Analitycally Irreducible Local Domains,}
\newblock   Memoirs Amer. Math. Soc. 598 (1997).


\bibitem{froberg}
\textsc{R. Fröberg, G. Gottlieb and R. Häggkvist},
\newblock \emph{On numerical semigroups,}
\newblock   Semigroup Forum  35 (1987), 63--83.

\bibitem{pacific}
\textsc{P.A. García-Sánchez and  J. C. Rosales},
\newblock \emph{Numerical semigroups generated by intervals, }
\newblock  Pacific. J. Math. 191(1999), 75--83.





\bibitem{ratio}
\textsc{M.A. Moreno-Frías, J.C. Rosales},
\newblock \emph{Ratio-Covarieties of Numerical Semigroups,}
\newblock  Axioms (2024), 13(3), 193. 

\bibitem{covariedades}
\textsc{M.A. Moreno-Frías, J.C. Rosales},
\newblock \emph{The covariety of numerical semigroups with fixed Frobenius number,}
\newblock J. Algebra. Comb. https://doi.org/10.1007/s10801-024-01342-x





\bibitem{alfonsin}
\textsc{J. L. Ramírez Alfonsín},
\newblock \emph{The Diophantine Frobenius Problem,}
\newblock Oxford University Press, London (2005).



\bibitem{JPAA}
\textsc{J. C. Rosales and  M. B. Branco},
\newblock \emph{Numerical Semigroups that can be expressed as an intersection of symmetric numerical semigroups,}
\newblock   J. Pure Appl. Algebra  171 (2002), 303--314.

\bibitem{coe}
\textsc{J. C. Rosales,  M. B. Branco, M.A. Traesel},
\newblock \emph{Numerical Semigroups of Coated odd elements,}
\newblock https://arxiv.org/abs/2407.17153v1






\bibitem{libro}
\textsc{J. C. Rosales and P. A. Garc\'ia-S\'anchez},
\newblock \emph{Numerical Semigroups,}
\newblock   Developments in Mathematics, Vol. 20, Springer, New York, 2009.



\bibitem{sylvester}
\textsc{J. J. Sylvester},
\newblock \emph{Mathematical question with their solutions,}
\newblock Educational Times 41  (1884), 21.



\end{thebibliography}
\end{document}